
\documentclass[a4paper,12pt,onecolumn]{article}


\usepackage{hyperref}
\usepackage[vmargin=2cm,hmargin=2cm]{geometry}
\usepackage{bm}
\usepackage[utf8]{inputenc}
\usepackage{empheq}
\usepackage{stackrel}
\usepackage{cases}
\usepackage{mathtools}
\usepackage{amsthm,amsmath,amscd}
\usepackage{makeidx}
\usepackage[charter]{mathdesign}
\usepackage{perpage}
\usepackage{tikz-cd}
\tikzcdset{every label/.append style = {font = \small}}
\usepackage{caption}
\captionsetup[figure]{format=plain,
justification=justified,font={it,small},
textfont={},margin=30pt,indention=0pt,
parindent=0pt,hangindent=0pt,singlelinecheck=true,labelfont={bf},labelsep=period,name={Fig.}}

\usepackage{graphicx}
\DeclareMathSizes{12}{12}{8}{6}

\usepackage{cite}
\usepackage{url}
\usepackage{accents}


\newtheoremstyle{ptheorem}{1em}{0em}{\itshape}{}{\bfseries}{.}{.5em}{\thmname{#1}\thmnumber{ #2}\thmnote{ (\hspace{-.01pt}{#3})}}

\theoremstyle{ptheorem}

\newtheorem{thm}{Theorem}[section]
\newtheorem{pro}[thm]{Proposition}

\newtheorem{cor}[thm]{Corollary}

\newtheoremstyle{hdef}{1em}{0em}{}{}{\bfseries}{.}{.5em}{\thmname{#1}\thmnumber{ #2}\thmnote{ (\hspace{-.01pt}{#3})}}
\theoremstyle{hdef}

\newtheorem{dfn}[thm]{Definition}
\newtheorem{rem}[thm]{Remark}
\newtheorem{notation}[thm]{Notation}
\makeatletter
\newtheoremstyle{premark}{1em}{0em}{
\addtolength{\@totalleftmargin}{1.5em}
\addtolength{\linewidth}{-1.5em}
\parshape 1 1.5em \linewidth}{}{\scshape}{.}{.5em}{}
\makeatother

\theoremstyle{hdef}

\newtheorem{exa}[thm]{Example}

\numberwithin{equation}{section}
\numberwithin{figure}{section}

\parskip=.5em

\allowdisplaybreaks

\author{
F. Adri\'an F. Tojo\footnote{The author was partially supported by Ministerio de Econom\'ia y Competitividad, Spain, and FEDER, project MTM2013-43014-P, and by the Agencia Estatal de Investigaci\'on (AEI) of Spain under grant MTM2016-75140-P, co-financed by the European Community fund FEDER.} \\
\normalsize e-mail: fernandoadrian.fernandez@usc.es\\
\normalsize \emph{Instituto de Ma\-te\-m\'a\-ti\-cas, Facultade de Matem\'aticas,} \\ \normalsize\emph{Universidade de Santiago de Com\-pos\-te\-la, Spain.}\\ 
}

\begin{document}
\title{Green's Functions of Recurrence Relations\\ with Reflection}

\date{}

\maketitle


\begin{abstract}
In this work we develop an algebraic theory of linear recurrence equations and systems with constant coefficients and reflection. We obtain explicit solutions and the Green's functions associated to different problems under general linear boundary conditions. Furthermore, we establish different relations between the algebras of recurrence and differential operators, showing the similarities and differences between them.
\end{abstract}

{\small\textbf{Keywords:} Reflection, Recurrence Relations, System of First Order Difference Equations, General Boundary Conditions

\textbf{MSC:} Primary 34B05, 34B27, 12H10; secondary 15A06, 47B48}

\section{Introduction}

In recent years, the study of differential equations with reflection has progressed through various research lines. On one hand we have those works that deal with qualitative applications, such as boundedness \cite{Aft}, periodicity \cite{Cabada2014b} or existence and uniqueness of solution \cite{Cab5,Gup,Ore}. Other articles find Hilbert bases through operator eigenfunction decomposition \cite{Sarsenbi1,Sarsenbi2}. Finally, we have those works in which the authors obtain explicit solutions or the associated Green's functions. That is the case of \cite{CabToj,Toj3,Cab4,CabToj2} and specially of \cite{CTMal,CaTo}, where they develop a general theory of Green's functions in the case of differential equations and differential systems respectively.

Despite all of this progress in the field, there have not been any works yet in which the authors obtain Green's functions of recurrence relations with reflection, something that, following the usual parallelism between differential and difference equations, should be possible. The aim of this work is therefore to fill this void in the theory, by providing and algebraic theory of recurrence relations and systems with reflection and constructing the Green's functions associated to different problems.

The basic idea exploited in \cite{CTMal,CaTo} is to endow differential equations with reflection with an adequate algebraic structure. In order to achieve this, the authors first observe that homogeneous linear differential equations with reflection and constant coefficients can always be expressed in the form 
\begin{equation}\label{eqdif}Tu(t):=\sum_{k=0}^na_ku^{(k)}(t)+\sum_{k=0}^nb_ku^{(k)}(-t)=0.\end{equation}

The operator $T$ in~\eqref{eqdif} can be considered as a composition of simpler operators. First, we have the usual \emph{differential operator} which we will note by $\widetilde D$, but also we have to consider the \emph{pullback by the reflection} function $\varphi(t)=-t$, that is, the operator $\varphi^*$ such that $(\varphi^* f)(t)=f(-t)$ for any function $f:{\mathbb R}\to{\mathbb R}$.

Now we can consider the algebra of linear differential operators with reflection ${\mathbb R}[\widetilde D,\widetilde\varphi^*]$ as defined in \cite{CaTo}. This algebra consists of all operators of the form of $T$. These operators can be written as $\widetilde\varphi^*P(\widetilde D)+Q(\widetilde D)$ where $P$ and $Q$ belong to ${\mathbb R}[\widetilde D]$, that is, the real polynomials on the abstract variable $\widetilde D$. The algebraic structure is provided by the usual composition of operators and the rules derived from it. For instance, $(\widetilde\varphi^*)^2=\operatorname{Id}$, where $\operatorname{Id}$ is the \emph{identity operator}, and, if we write $\varphi^*(P)(\widetilde D):=P(-\widetilde D)$, we have that $ P\circ\widetilde \varphi^*=\widetilde \varphi^*\circ\widetilde \varphi^*(P)$.

In the case of the operator $T$ in~\eqref{eqdif} it can be expressed as
\begin{equation}\label{Lop}T=\sum_ka_k\widetilde\varphi^*\widetilde D^k+\sum_kb_k\widetilde D^k\in{\mathbb R}[\widetilde D,\varphi^* ].\end{equation}
In \cite{CTMal} we find results that allow us to obtain the solution of differential problems with such operators.
\begin{thm}[{\cite[Theorem 2.1]{CTMal}}]\label{thmdec}
Take $T$ defined as in~\eqref{Lop} and take
\begin{equation}\label{Rop}R=\sum_{k}a_k\varphi^*\widetilde D^k+\sum_{l}(-1)^{l+1}b_l\widetilde D^l\in{\mathbb R}[\widetilde D,\varphi^* ].\end{equation} Then $RT=TR\in{\mathbb R}[\widetilde D]$.
\end{thm}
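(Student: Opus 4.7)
The plan is to rewrite both operators in a compact polynomial form, expand the products $TR$ and $RT$, and then collapse them using the two structural rules already recorded in the excerpt: the involution $(\widetilde\varphi^{*})^{2}=\operatorname{Id}$ and the twisted commutation $P(\widetilde D)\,\widetilde\varphi^{*}=\widetilde\varphi^{*}\,P(-\widetilde D)$.

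First I would set $P(\widetilde D):=\sum_{k}a_{k}\widetilde D^{k}$ and $Q(\widetilde D):=\sum_{k}b_{k}\widetilde D^{k}$. With the convention $\varphi^{*}(Q)(\widetilde D)=Q(-\widetilde D)$, the operators become
\[
T=\widetilde\varphi^{*}P+Q,\qquad R=\widetilde\varphi^{*}P-\varphi^{*}(Q),
\]
since $\sum_{l}(-1)^{l+1}b_{l}\widetilde D^{l}=-Q(-\widetilde D)$. From the basic rules, one derives the ``sandwich'' identity $\widetilde\varphi^{*}P\,\widetilde\varphi^{*}=\varphi^{*}(P)$, which is the workhorse of the calculation.

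Next I would expand both products into four summands and push every $\widetilde\varphi^{*}$ to the left using the commutation rule. For $TR$ the first summand yields $\widetilde\varphi^{*}P\,\widetilde\varphi^{*}P=\varphi^{*}(P)\,P$, a pure polynomial in $\widetilde D$. The two ``mixed'' summands become $-\widetilde\varphi^{*}P\,\varphi^{*}(Q)+\widetilde\varphi^{*}\varphi^{*}(Q)\,P$, which vanish because $P$ and $\varphi^{*}(Q)$ are polynomials in $\widetilde D$ and therefore commute. The last summand is simply $-Q\,\varphi^{*}(Q)$. Hence $TR=\varphi^{*}(P)\,P-\varphi^{*}(Q)\,Q\in\mathbb R[\widetilde D]$. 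The computation of $RT$ is entirely symmetric: the first term again produces $\varphi^{*}(P)P$, the mixed terms cancel for the same commutation reason, and the final term is $-\varphi^{*}(Q)\,Q$. Thus $RT$ equals the same polynomial, giving $RT=TR\in\mathbb R[\widetilde D]$.

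I do not expect a genuine obstacle; the proof is purely algebraic and the only real task is careful bookkeeping of signs when applying $\varphi^{*}$ and of the order of factors when using $P\,\widetilde\varphi^{*}=\widetilde\varphi^{*}\varphi^{*}(P)$. The minor notational wrinkle to watch is that $\varphi^{*}$ denotes two related but formally distinct things (a pullback of functions and a sign change on the symbol $\widetilde D$); once these are identified through the commutation rule of the excerpt, the cancellations are immediate.
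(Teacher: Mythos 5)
Your proof is correct: the identifications $T=\widetilde\varphi^*P+Q$ and $R=\widetilde\varphi^*P-\varphi^*(Q)$ are exactly right, and the expansion via $\widetilde\varphi^*P\widetilde\varphi^*=\varphi^*(P)$ together with commutativity of polynomials in $\widetilde D$ gives $RT=TR=\varphi^*(P)P-\varphi^*(Q)Q\in{\mathbb R}[\widetilde D]$. The paper only cites this theorem from \cite{CTMal} without reproducing a proof, but your argument is precisely the computation the paper itself carries out for the analogous Theorem~\ref{T::RL1} in the recurrence setting, so it is essentially the same approach.
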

\begin{thm}[{\cite[Theorem 3.2]{CTMal}}]\label{thmdei} Consider the problem
\begin{equation}\label{rbvp}Tu(t)=h(t),\ t\in [-T,T],\ B_iu=0,\ i=1,\dots,n,
\end{equation}
where $T$ is defined as in~\eqref{Lop}, $h\in L^1([-T,T])$ and
\begin{displaymath}B_iu:=\sum_{j=0}^{n-1}\alpha_{ij}u^{(j)}(-T)+\beta_{ij}u^{(j)}(T).\end{displaymath}
Then, there exists $R\in {\mathbb R}[\widetilde D,\varphi^* ]$ (as in~\eqref{Rop}) such that $S:=RT\in{\mathbb R}[\widetilde D]$ and the unique solution of problem~\eqref{rbvp} is given by $\int_a^bR_\vdash G(t,s)h(s)\operatorname{d} s$ where $G$ is the Green's function associated to the problem $Su=0$, $B_iRu=0$, $B_iu=0$, $i=1,\dots,n$, assuming that it has a unique solution.
\end{thm}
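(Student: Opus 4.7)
The plan is to turn the reflection problem~\eqref{rbvp} into an equivalent purely differential boundary value problem by applying the companion operator $R$ furnished by Theorem~\ref{thmdec}, and then to express the solution in closed form by transferring $R$ from the datum $h$ onto the Green's function of the resulting higher order ODE problem.

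First I would invoke Theorem~\ref{thmdec} to produce $R\in\mathbb R[\widetilde D,\varphi^*]$ as in~\eqref{Rop} with $S:=RT=TR\in\mathbb R[\widetilde D]$. Because $R$ and $T$ each carry derivatives of order up to $n$, $S$ is a differential operator of order $2n$, so its associated boundary problem needs $2n$ linear conditions. The $2n$ functionals $\{B_iv=0,\ B_iRv=0\}_{i=1}^n$ are the natural candidates, and, under the standing uniqueness hypothesis, the corresponding Green's function $G(t,s)$ exists and is characterized by $S_tG(\cdot,s)=\delta_s$ together with $B_iG(\cdot,s)=0$ and $B_i[RG(\cdot,s)]=0$ for every~$s$. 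I would then define the candidate
\[
u(t):=\int_{-T}^{T} R_\vdash G(t,s)\,h(s)\operatorname{d}s,
\]
where $R_\vdash$ denotes the action of $R$ in the first variable with $s$ held fixed. Commuting $T$ past the integral and using $TR=S$ yields $Tu(t)=\int T_t[R_tG(t,s)]h(s)\operatorname{d}s=\int S_tG(t,s)h(s)\operatorname{d}s=h(t)$; likewise, for each $i$, $B_iu=\int B_i[R_\vdash G(\cdot,s)]h(s)\operatorname{d}s=0$ precisely because $G$ is built to satisfy $B_iRv=0$. Uniqueness of $u$ is then inherited from the hypothesis.

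The main obstacle I foresee is the rigorous justification of the two commutations of $T$ and $R$ with the integral against an $L^1$ datum, since $R$ mixes $n$ differentiations with the pullback $\varphi^*$ acting in the first variable while $G$ is only piecewise smooth in $t$, with a jump in its $(2n-1)$-st derivative across $t=s$ that is responsible for generating the Dirac mass when $S_t$ is applied. The secondary, more conceptual point is to motivate the auxiliary conditions $B_iRv=0$: they are engineered so that the reflection introduced by $R_\vdash$ in $G$ does not contaminate the boundary values of $u$, yielding exactly $B_iu=0$. Finally, I would record that the uniqueness assumption properly refers to the auxiliary differential problem for $S$ (ensuring existence of $G$), and check that together with $TR=S$ it forces uniqueness for~\eqref{rbvp} as well, closing the argument.
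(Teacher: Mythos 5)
A point of orientation first: the paper does not prove this statement. It is quoted verbatim from \cite{CTMal} as background in the introduction, so there is no in-paper proof to compare against. Your plan for the \emph{existence} half is nevertheless the right one and is the argument of the cited source: let $v:=\int_{-T}^{T}G(\cdot,s)h(s)\operatorname{d} s$ be the solution of the reduced problem $Sv=h$, $B_iv=B_iRv=0$, and set $u:=Rv$; then $Tu=TRv=Sv=h$ and $B_iu=B_iRv=0$. I would advise doing it in exactly that order rather than first writing $u=\int R_\vdash G(t,s)h(s)\operatorname{d} s$ and then pushing $T$ under the integral: the reflection inside $R_\vdash$ creates a second singular diagonal at $t=-s$, and applying $R$ \emph{after} the integral lets you invoke the defining property $S\int G(\cdot,s)h(s)\operatorname{d} s=h$ without ever differentiating $R_\vdash G$ across its jumps. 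Note also that $S=RT$ has order $2n$ only when $a_n^2\neq b_n^2$ (the top-order terms cancel otherwise), so the count of $2n$ boundary conditions is not automatic.

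The genuine gap is uniqueness. You assert it is ``inherited from the hypothesis,'' but the hypothesis is nondegeneracy of the $2n$-th order problem subject to the $2n$ functionals $B_i$, $B_iR$. If $w$ solves the homogeneous original problem, $Tw=0$ and $B_iw=0$, then indeed $Sw=RTw=0$ and $B_iw=0$, but nothing in your sketch gives $B_iRw=0$: from the explicit forms~\eqref{Lop} and~\eqref{Rop} one has $R-T=-2\sum_{l\ \mathrm{even}}b_l\widetilde D^l$, so on $\ker T$ the quantity $B_iRw=-2\sum_{l\ \mathrm{even}}b_lB_i\bigl(w^{(l)}\bigr)$ involves boundary values of derivatives of $w$ up to order $2n-1$, which are not controlled by the $n$ conditions $B_iw=0$. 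Hence injectivity of the reduced problem does not transfer to the original one by the route you describe; the correspondence between the solution sets of $Tu=h$ and of the reduced problem is precisely the delicate point of this theory and requires a separate argument (or must be absorbed into the hypotheses, reading ``assuming that it has a unique solution'' as applying to the original problem as well).
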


An analogous study can be done for linear systems with reflection with the same algebraic structure --see \cite{CaTo}. Take, for instance, the system
\begin{equation}\label{hlsystemv}Hu(t):=Fu'(t)+Gu'(-t)+A u(t)+Bu(-t)=0, t\in{\mathbb R}.
\end{equation}

In this context we find the following results.

\begin{thm}[{\cite[Theorem 4.5]{CaTo}}]\label{thmexpfmv}
Assume $F-G$ and $F+G$ are invertible. Then
\begin{equation}\label{Xseries}X(t): = \sum_{k=0}^\infty\frac{E^k t^{2k}}{(2k)!} -(F+G)^{-1}(A+B)\sum_{k=0}^\infty\frac{E^k t^{2k+1}}{(2k+1)!},\end{equation}
where $E=(F-G)^{-1}(A-B)(F+G)^{-1}(A+B)$, is a fundamental matrix of problem~\eqref{hlsystemv}. If we further assume $A-B$ and $A+B$ are invertible, then $E$ is invertible and we can consider a square root $\Omega$ of $E$. Then,
\begin{equation}\label{fme}X(t)=\cosh \Omega t -(F+G)^{-1}(A+B)\Omega^{-1}\sinh\Omega t.\end{equation}
\end{thm}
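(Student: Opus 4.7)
The plan is to prove the result by direct verification rather than by invoking a reduction/decomposition theorem analogous to Theorem~1.1. Writing $X(t)=C_E(t)-C\,S_E(t)$ with $C:=(F+G)^{-1}(A+B)$ and
\begin{equation*}
C_E(t):=\sum_{k=0}^\infty\frac{E^k t^{2k}}{(2k)!},\qquad S_E(t):=\sum_{k=0}^\infty\frac{E^k t^{2k+1}}{(2k+1)!},
\end{equation*}
the strategy is to substitute $X$ into $H$ and show that the resulting expression vanishes identically on $\mathbb{R}$, using only the definition of $E$ and the invertibility of $F\pm G$. Convergence of both matrix series on the whole real line is immediate from the bound $\|E^k\|\le\|E\|^k$, so term-by-term differentiation and the reflection $t\mapsto -t$ may be applied freely.

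The first step is to record the elementary identities $C_E'=E\,S_E$, $S_E'=C_E$, $C_E(-t)=C_E(t)$, $S_E(-t)=-S_E(t)$, and to observe that $X(0)=I$, so $X(t)$ is invertible in a neighbourhood of the origin; consequently it will qualify as a fundamental matrix as soon as $HX\equiv 0$ is proved. I would then compute $X'(t)$, $X(-t)$ and $X'(-t)$ as explicit linear combinations of $C_E(t)$ and $S_E(t)$ using these identities, and plug the three expressions into $HX(t)=FX'(t)+GX'(-t)+AX(t)+BX(-t)$, grouping the outcome according to which of the two series each term multiplies.

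The key algebraic step is the check that the coefficients of $C_E(t)$ and of $S_E(t)$ in this expansion both vanish: one of them collapses to $(A+B)-(F+G)C$, which is zero by the very definition of $C$, while the other collapses to $(F-G)E-(A-B)C$, zero by the definition $E=(F-G)^{-1}(A-B)(F+G)^{-1}(A+B)$. For the second formula~\eqref{fme}, the extra hypothesis that $A\pm B$ are invertible makes $E$ a product of four invertibles and hence invertible, so it admits a square root $\Omega$; matching power series then identifies $C_E(t)=\cosh(\Omega t)$ and $S_E(t)=\Omega^{-1}\sinh(\Omega t)$, from which~\eqref{fme} follows by substitution. I do not expect a serious obstacle here since the calculation is routine, but the one place where genuine care is needed is in keeping the matrix factors in the order dictated by the non-commutativity of $F,G,A,B$; writing $E$ or $C$ with the factors reversed would sabotage both cancellations.
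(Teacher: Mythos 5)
Your verification is correct, but note that this paper contains no proof of this statement to compare against: Theorem~\ref{thmexpfmv} is quoted verbatim from \cite[Theorem 4.5]{CaTo} as background in the introduction. Your direct-substitution argument is sound: with $C_E'=ES_E$, $S_E'=C_E$ and the parities of $C_E,S_E$, one gets $HX(t)=\bigl[(F-G)E-(A-B)C\bigr]S_E(t)+\bigl[(A+B)-(F+G)C\bigr]C_E(t)$, and both brackets vanish exactly as you say. This differs in spirit from the derivation in \cite{CaTo}, which \emph{constructs} $X$ by splitting $u$ into even and odd parts (the operators $\widetilde E,\widetilde O$ discussed in Section 2.3 here), obtaining the decoupled second-order equation $e''=Ee$ for the even part and recovering the odd part from it; your route only \emph{verifies} the given formula, which is shorter but gives no indication of where $E$ and the series come from, nor that the columns of $X$ exhaust the solution set (under the weak definition of fundamental matrix used in this paper --- $X(t)c$ is a solution for every constant vector $c$ --- your check $HX\equiv 0$ is indeed all that is required, so this is not a gap). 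Two minor points of care: the square root $\Omega$ of an invertible real matrix may have to be taken over $\mathbb{C}$ (the expressions $\cosh\Omega t$ and $\Omega^{-1}\sinh\Omega t$ are nonetheless real, being equal to $C_E$ and $S_E$), and your insistence on preserving the order of the non-commuting factors in $E$ and $C$ is exactly the right thing to worry about.
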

Consider now the initial value problem
\begin{align}\label{pgf1}Fu'(t)+Gu'(-t)+A u(t)+Bu(-t) & =\gamma,\ t\in {\mathbb R},\\ \label{pgf2} u(0) & =\delta,
\end{align}
where $A,B,F,G\in{\mathcal M}_n({\mathbb R})$, $\gamma\in{\mathcal C}({\mathbb R})$, and $\delta\in{\mathbb R}^n$. 
\begin{thm}[{\cite[Theorem 6.1]{CaTo}}]Consider the problems
\begin{equation}\label{hlsystemnh}Fu'(t)+Gu'(-t)+A u(t)+Bu(-t)=\gamma, t\in{\mathbb R},
\end{equation}
and
\begin{equation}\label{hlsystemnh2}Fu'(t)-Gu'(-t)+A u(t)-Bu(-t)=\gamma, t\in{\mathbb R}.
\end{equation} Assume $F+G$ and $F-G$ are invertible, $X$ and $Y$ are fundamental matrices of problems~\eqref{hlsystemnh} and~\eqref{hlsystemnh2} respectively and $\bm{\mathcal X}$ is invertible in ${\mathbb R}$. Then problem~\eqref{pgf1}--\eqref{pgf2} has a unique solution $u:{\mathbb R}\to{\mathbb R}^n$ and it is given by \[ u(t)=X(t)X(0)^{-1}\delta+\int_{-t}^tG(t,s)\gamma(s)\operatorname{d} s,\]  where
\[ G(t,s)=\begin{dcases} \frac{1}{2}\left( \begin{array}{c|c} X(t) & Y(t)\end{array}\right) {\bm{\mathcal X}(s)}^{-1}\begin{pmatrix} (F-G)^{-1} \\ \hline (F+G)^{-1}\end{pmatrix}, & 0\leqslant s\leqslant t,\\
\frac{1}{2}\left( \begin{array}{c|c} X(t) & Y(t)\end{array}\right) {\bm{\mathcal X}(-s)}^{-1}\begin{pmatrix} -(F-G)^{-1} \\ \hline (F+G)^{-1}\end{pmatrix}, & -t\leqslant s<0.
\end{dcases}\] 
\end{thm}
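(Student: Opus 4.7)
The plan is to convert the reflection IVP~\eqref{pgf1}--\eqref{pgf2} into a standard linear first-order system of dimension $2n$ without reflection, solve it by the classical variation of parameters formula, and then project onto the $u$-component.

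First I would set $v(t):=u(-t)$, so that $v'(t)=-u'(-t)$, and write~\eqref{pgf1} at $t$ together with the equation at $-t$. This yields the reduced IVP
\[
\begin{pmatrix}F&-G\\G&-F\end{pmatrix}\!\begin{pmatrix}u\\v\end{pmatrix}'\!(t)+\begin{pmatrix}A&B\\B&A\end{pmatrix}\!\begin{pmatrix}u\\v\end{pmatrix}\!(t)=\begin{pmatrix}\gamma(t)\\\gamma(-t)\end{pmatrix},\qquad\begin{pmatrix}u\\v\end{pmatrix}\!(0)=\begin{pmatrix}\delta\\\delta\end{pmatrix}.
\]
The hypotheses that $F\pm G$ are invertible make the leading matrix invertible (a block computation in the $u\pm v$ frame diagonalises it with blocks $F\pm G$), so this is a bona fide linear first-order IVP in dimension $2n$.

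Next I would identify a fundamental matrix of the homogeneous reduced system in terms of $X$ and $Y$. The system is invariant under the involution $\sigma:(u,v)(t)\mapsto(v(-t),u(-t))$: its $(+1)$-eigenspace consists of pairs with $v(t)=u(-t)$, which are exactly the solutions of the homogeneous version of~\eqref{hlsystemnh} (with fundamental matrix $X$); its $(-1)$-eigenspace consists of pairs with $v(t)=-u(-t)$, which—after substitution—satisfy~\eqref{hlsystemnh2} (with fundamental matrix $Y$). The $2n\times 2n$ matrix $\bm{\mathcal X}$ is then assembled from these two $n$-dimensional bases; its invertibility on $\mathbb R$ is the standing hypothesis of the theorem. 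Variation of parameters now provides a unique $(u,v)^T$ given by $\bm{\mathcal X}(t)\bm{\mathcal X}(0)^{-1}(\delta,\delta)^T$ plus an integral from $0$ to $t$, and extracting the first $n$ components recovers $u$.

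The final step is to massage this into the form stated. The symmetric datum $(\delta,\delta)^T$ lies in the $(+1)$-eigenspace of $\sigma$, so the homogeneous part collapses to $X(t)X(0)^{-1}\delta$. For the particular part, inverting the leading matrix explicitly produces the column vectors carrying $(F-G)^{-1}$ and $(F+G)^{-1}$ (after the $u\pm v$ diagonalisation); then splitting the integrand into its $\gamma(s)$ and $\gamma(-s)$ pieces and applying the substitution $s\mapsto-s$ to the second piece converts its domain from $[0,t]$ to $[-t,0]$, producing the second branch of $G(t,s)$. The sign flip on $(F-G)^{-1}$ between the two branches traces back to the fact that the change of variable reverses the antisymmetric component of the driving term while preserving the symmetric one.

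The main obstacle I expect is the block-matrix bookkeeping at this last stage: tracking all signs through the substitution $s\mapsto -s$ and verifying that the algebraic reassembly of $\bm{\mathcal X}(t)\bm{\mathcal X}(\pm s)^{-1}$ with the coefficient vectors $(\pm(F-G)^{-1},(F+G)^{-1})^T$ yields precisely the piecewise kernel in the statement. Uniqueness of the solution is then inherited from uniqueness in the reduced $2n$-dimensional IVP, since the correspondence $u\mapsto(u,u(-\cdot))$ is injective.
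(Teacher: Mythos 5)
This theorem is quoted from \cite[Theorem 6.1]{CaTo} as background and is not proved in the present paper, but your strategy is exactly the one the paper itself uses for the discrete analogues (Theorems 4.4 and 4.6): set $v(t)=u(-t)$, pass to a $2n$-dimensional system whose leading matrix is block-diagonalised by the $u\pm v$ change of frame into $F+G$ and $F-G$, build the fundamental matrix $\bm{\mathcal X}$ from the $\pm1$-eigenspaces of the involution $\sigma$, and apply variation of parameters. Your outline is correct; the only point worth making explicit is that existence (not just uniqueness) requires checking that the solution of the reduced nonhomogeneous IVP actually satisfies $v=u(-\cdot)$, which follows because the forcing term $(\gamma(t),\gamma(-t))$ and the initial datum $(\delta,\delta)$ are both fixed by $\sigma$, so the unique reduced solution is $\sigma$-invariant and its first component solves the original reflection problem.
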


Our objective will be to obtain similar results as the ones presented above for the case of linear recurrence equations and systems with reflection. In this work we will build a similar algebraic structure for the case of recurrence relations, pinpointing the similarities and differences with the algebra ${\mathbb R}[\widetilde D,\widetilde\varphi^*]$. In Section 2 we define the algebra ${\mathbb F}[D,D^{-1},\varphi^*]$ of recurrence relations with reflection and study its properties as well as its relation to the algebra ${\mathbb F}[\widetilde D,\widetilde\varphi^*]$. In Section 3 we provide Green's functions for recurrence relations with reflection and general boundary conditions and in Section 4 we provide an analogous theory for linear systems with reflection. Finally, in Section 5 we establish the conclusions regarding the theory and pose several open problems worth studying.

\section{Recurrence relations with reflection}

Let us first set up the basic definitions and notation in order to study recurrence relations with reflection in the highest generality.

\subsection{Definitions and notation}
Given two sets $A$ and $B$ we denote by ${\mathcal F}(A,B)$ the space of functions $f: A\to B$. Let ${\mathbb F}$ be a field, $\overline{\mathbb F}$ its algebraic closure and $V$ a vector space over ${\mathbb F}$. Let ${\mathscr S}$ be the space of ${\mathbb Z}$-sequences in $V$ that is, ${\mathscr S}:={\mathcal F}({\mathbb Z},V)$. ${\mathscr S}$ is an ${\mathbb F}$-vector space. Given $x\in{\mathscr S}$ we write $x(k)\equiv x_k\equiv(x)_k$ and $x\equiv(x_k)_{k\in{\mathbb Z}}$. We define the \emph{right shift operator} $D$ as
\begin{center}\begin{tikzcd}[row sep=tiny]
{\mathscr S} \arrow{r}{D} & {\mathscr S}\\
(x_k)_{k\in{\mathbb Z}} \arrow[mapsto]{r} & (x_{k+1})_{k\in{\mathbb Z}}
\end{tikzcd}
\end{center}
$D$ is bijective and, in the present discussion, it will play the role the differential operator does in differential equations (thence the $D$ as notation). That role could also be played by the forward difference operator $\Delta:=D-\operatorname{Id}$ but, for simplicity, we stick to $D$. \par
An \emph{order $n$ linear recurrence relation} (sometimes referred as \emph{difference equation}, although there is a subtle difference between the two of them \cite{Mill}) with constant coefficients is normally expressed as
\begin{equation}\label{recrel}x_{k+n}=\sum_{j=0}^{n-1}a_{j}x_{k+j}+c_k,\ k\in{\mathbb N};\quad x_k=\xi_k,\ k=1,\dots,n,\end{equation}
where $\xi_k\in{\mathbb F},\ k=1,\dots,n$; $a_j\in{\mathbb F},\ j=0,\dots,n-1$; $a_0\ne 0$ and $c=(c_k)_{k\in{\mathbb N}}$. A \emph{solution} of the difference equation~\eqref{recrel} will be a sequence $u=(u_k)_{k\in{\mathbb N}}$ such that equation~\eqref{recrel} holds when substituting $x_k$ by $u_k$ for every $k\in{\mathbb N}$.

Using operator $D$, we can rewrite the recurrence relation~\eqref{recrel} as
\begin{align*}\left(D^n-\sum_{j=0}^{n-1}a_jD^j\right)x=c;\quad x_k=\xi_k,\ k=1,\dots,n,\end{align*}
where $x=(x_k)_{k\in{\mathbb N}}$. So, it is only fitting that we study equations of the kind
\begin{equation}\label{eqdifor}Ux:=\sum_{j=0}^na_jD^jx=c;\quad x_k=\xi_k,\ k=1,\dots,n,\end{equation}
where $a_0a_n\ne 0$. We say that $U$ occurring in~\eqref{eqdifor} belongs to ${\mathbb F}[D]$, the algebra of polynomials on $D$ with coefficients in ${\mathbb F}$. \par Now we introduce reflections in this context, which forces us to work on ${\mathbb Z}$ instead of ${\mathbb N}$. Let $\varphi:{\mathbb Z}\to{\mathbb Z}$ be such that $\varphi(t)=-t$. We define the pullback by $\varphi$, $\varphi^*$, as
\begin{center}\begin{tikzcd}[row sep=tiny]
{\mathscr S} \arrow{r}{\varphi^*} & {\mathscr S}\\
(x_k)_{k\in{\mathbb Z}} \arrow[mapsto]{r} &(x_{\varphi(k)})_{k\in{\mathbb Z}}
\end{tikzcd}
\end{center}

We can consider now linear difference equations with reflection of the form
\begin{equation}\label{difeqin}Lx:=\sum_{j=-n}^n\left( a_j+b_j\varphi^*\right) D^jx=c,\end{equation}
where $x,c\in{\mathscr S}$; $a_{j},b_j\in {\mathbb F}$ for $j=0,\dots,n$ and $D^{-j}=(D^{-1})^j$ for $j\in{\mathbb N}$. We say $L$ belongs to the operator algebra ${\mathbb F}[D,D^{-1},\varphi^*]$ generated by $D^j$ and $\varphi^*D^j$, $j\in{\mathbb Z}$ with the composition operation. We will omit the composition sign while working in this algebra.\par

\subsection{Algebraic structure}

In this section we enter the algebraic structure of ${\mathbb F}[D,D^{-1},\varphi^*]$ in greater detail.

\begin{dfn}An expression of the kind $\sum_{j\in{\mathbb Z}}a_jD^j$ where $a_j\in{\mathbb F}$ and only finitely many elements of $\{a_j\}_{j\in{\mathbb Z}}$ are nonzero is called a \emph{formal Laurent polynomial} on the variable $D$. We will denote the set of Laurent polynomials in the variable $D$ by ${\mathbb F}[D,D^{-1}]$. This set has a natural structure of commutative ${\mathbb F}$-algebra with the sum, product by scalars and composition of operators --which is the product of Laurent polynomials in this case.
\end{dfn}
\begin{rem}Other realizations of the algebra ${\mathbb F}[D,D^{-1}]$ can be achieved. For instance, it can be considered as the algebra of (commutative) polynomials in two variables ${\mathbb F}[D,E]$ quotiented by the relation $ED=\operatorname{Id}$.

 Similarly, the operator algebra ${\mathbb F}[D,D^{-1},\varphi^*]$ is the quotient of the algebra of non commutative polynomials ${\mathbb F}\left< D,E,F\right>$ by the relations $DF=FE$, $DE=1$ and $F^2=1$.
Observe that a basic property of the interaction between $D$ and $\varphi^*$ is that
$D\varphi^*=\varphi^*D^{-1}$. In fact, we have that $P\varphi^*=\varphi^*\varphi^*(P)$ where $\varphi^*(P)(D):=P(D^{-1})$ for any $P\in{\mathbb F}[D,D^{-1}]$, that is,
${\mathbb F}[D,D^{-1},\varphi^*]$ consists of the operators of the form $\varphi^*P+Q$ with $P,\,Q\in{\mathbb F}[D,D^{-1}]$. It is for this reason that the operators defining linear recurrence relations with reflection can be reduced to those occurring in usual ordinary difference equations, as the following theorem shows.

\begin{thm}\label{T::RL1} Let $L=\varphi^*P+Q$ with $P,\,Q\in{\mathbb F}[D,D^{-1}]$. Then $R:=\varphi^*P-\varphi^*(Q)\in{\mathbb F}[D,\varphi^* ]$ satisfies $RL=LR\in{\mathbb F}[D,D^{-1}]$.
\end{thm}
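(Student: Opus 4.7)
The plan is to reduce both $RL$ and $LR$ to the same element of the commutative subalgebra ${\mathbb F}[D,D^{-1}]$ by purely formal manipulation, using a single commutation rule together with commutativity. The key rule, extracted from the relations collected in the preceding remark, is that for any Laurent polynomial $S\in{\mathbb F}[D,D^{-1}]$ one has $S\varphi^*=\varphi^*\varphi^*(S)$, where $\varphi^*(S)(D):=S(D^{-1})$; this follows by ${\mathbb F}$-linearity from $D\varphi^*=\varphi^*D^{-1}$. I will also freely use $(\varphi^*)^2=\operatorname{Id}$, the involution identity $\varphi^*(\varphi^*(S))=S$, and the fact that ${\mathbb F}[D,D^{-1}]$ is commutative.

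First I would expand $RL=(\varphi^*P-\varphi^*(Q))(\varphi^*P+Q)$ bilinearly into four terms. The two terms containing an internal $\varphi^*$, namely $\varphi^*P\,\varphi^*P$ and $\varphi^*(Q)\,\varphi^*P$, require the commutation rule: the first becomes $\varphi^*(P)\,P$ (using $\varphi^*P\varphi^*=\varphi^*(P)$), and the second, where $\varphi^*(Q)\in{\mathbb F}[D,D^{-1}]$ can be pushed through $\varphi^*$ as $\varphi^*(Q)\varphi^*=\varphi^*Q$, becomes $\varphi^*QP$. The remaining two terms $\varphi^*PQ$ and $\varphi^*(Q)Q$ are already of the desired shape. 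Collecting, the coefficient of $\varphi^*$ is $PQ-QP$, which vanishes by commutativity, leaving $RL=\varphi^*(P)P-\varphi^*(Q)Q\in{\mathbb F}[D,D^{-1}]$.

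The computation of $LR=(\varphi^*P+Q)(\varphi^*P-\varphi^*(Q))$ is symmetric: the bilinear expansion produces $\varphi^*P\,\varphi^*P=\varphi^*(P)P$, a diagonal term $Q\varphi^*(Q)=\varphi^*(Q)Q$ in ${\mathbb F}[D,D^{-1}]$, and two cross terms $-\varphi^*P\,\varphi^*(Q)$ and $Q\varphi^*P$; the commutation rule converts the latter into $\varphi^*\varphi^*(Q)P$, so the $\varphi^*$-coefficient of the sum is $\varphi^*(Q)P-P\varphi^*(Q)$, which again vanishes by commutativity of ${\mathbb F}[D,D^{-1}]$. One therefore obtains $LR=\varphi^*(P)P-\varphi^*(Q)Q=RL$.

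I do not expect any substantive obstacle: the statement is essentially the claim that the sign in $R=\varphi^*P-\varphi^*(Q)$ has been chosen precisely so that its $\varphi^*$-part annihilates $L$'s $\varphi^*$-part on both sides. The only real care needed is bookkeeping, in particular keeping the operator $\varphi^*$ (pullback on sequences) cleanly distinguished from the algebra involution $\varphi^*(\cdot)$ on ${\mathbb F}[D,D^{-1}]$ sending $D\mapsto D^{-1}$, so that the two scalar $\varphi^*$-coefficients produced in each expansion are seen to agree and cancel.
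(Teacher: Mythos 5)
Your proof is correct and follows essentially the same route as the paper's: expand the product bilinearly, push Laurent polynomials through $\varphi^*$ via $S\varphi^*=\varphi^*\varphi^*(S)$, and observe that the resulting $\varphi^*$-coefficient vanishes by commutativity of ${\mathbb F}[D,D^{-1}]$, leaving $\varphi^*(P)P-\varphi^*(Q)Q$. The only difference is cosmetic: the paper computes $RL$ explicitly and dismisses $LR$ with ``the same holds,'' whereas you carry out both expansions.
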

\begin{proof}
 \begin{align*}RL= & (\varphi^*P-\varphi^*(Q))(\varphi^*P+Q)=\varphi^*P\varphi^*P-\varphi^*(Q)Q+\varphi^*PQ-\varphi^*(Q)\varphi^*P \\= & \varphi^*(P)P-\varphi^*(Q)Q+\varphi^*PQ-\varphi^*QP=\varphi^*(P)P-\varphi^*(Q)Q\in{\mathbb F}[D,D^{-1}].\end{align*}

The same holds for $LR$.

\end{proof}

\begin{rem} Observe that, if $L$ is of the form in~\eqref{difeqin}, we have that $LRD^{2n}\in{\mathbb F}[D]$, but the same may hold for exponents $k<2n$. We will assume from now on that we take the least of these exponents. Also, in the particular case $a_j,b_j=0$ for $j<0$, we have that $LRD^{n}\in{\mathbb F}[D]$.
\end{rem}

Now, observe that, for any $P\in{\mathbb F}[D,D^{-1}]\backslash\{0\}$, $P$ can be expressed uniquely as $P(D)=P_*(D)D^k$ for some $P_*\in{\mathbb F}[D]$ without zero as root and $k\in{\mathbb Z}$. If $P,Q\in{\mathbb F}[D,D^{-1}]$, we say that $P$ \emph{divides} $Q$, and we write $P|Q$, if $P_*$ divides $Q_*$.

We can consider the set ${\mathbb F}_*[D]$ of ${\mathbb F}$-polynomials on the variable $D$ without zero roots --which is isomorphic to ${\mathbb F}[D]/(D)$, that is, ${\mathbb F}[D]$ quotiented by the ideal generated by $D$. Take also the product ${\mathbb F}_*[D]\times{\mathbb Z}$ --or, which is the same $({\mathbb F}[D]/(D))\times (D)$-- and the bijection
\begin{center}\begin{tikzcd}[row sep=tiny]
{\mathbb F}[D,D^{-1}] \arrow{r}{\Psi} & {\mathbb F}_*[D]\times{\mathbb Z}\\
P_*(D)D^k \arrow[mapsto]{r} &(P_*,k)
\end{tikzcd}
\end{center}
with inverse
\begin{center}\begin{tikzcd}[row sep=tiny]
{\mathbb F}_*[D]\times{\mathbb Z} \arrow{r}{\Psi^{-1}} & {\mathbb F}[D,D^{-1}]\\
(Q,k) \arrow[mapsto]{r} &Q(D)D^k
\end{tikzcd}
\end{center}
Inducing the algebra operations of ${\mathbb F}[D,D^{-1}]$ in ${\mathbb F}_*[D]\times{\mathbb Z}$ we get, for $(P,k),(Q,j)\in {\mathbb F}_*[D]\times{\mathbb Z}$ and $\lambda\in{\mathbb F}$,
\begin{align*}
\lambda\cdot(P,k) & =(\lambda P,k), \\
(P,k)\cdot(Q,j) & =(P\,Q,k+j),\\
(P,k)+(Q,j) & =\Psi(P(D)D^k+Q(D)D^j).
\end{align*}
\end{rem}

We can express the relation $P(D)\varphi^*=\varphi^*P(D^{-1})$ in terms of ${\mathbb F}_*[D]\times{\mathbb Z}$ in the following way:
\[ \left( \alpha\prod_{j=1}^n(x-\lambda_j),k\right) \varphi^*=\varphi^*\left( (-1)^n\alpha\prod_{j=1}^n\lambda_j\prod_{j=1}^n\left( x-\frac{1}{\lambda_j}\right) ,-k-n\right) ,\] 
for any $\left( \alpha\prod_{j=1}^n(x-\lambda_j),k\right) \in{\mathbb F}_*[D]\times{\mathbb Z}$.\par 

The algebra isomorphism $\Psi$ allows us to define greatest common divisor ($\operatorname{gcd}$) in ${\mathbb F}[D,D^{-1}]$ through ${\mathbb F}_*[D]\times{\mathbb Z}$. Remember that the greatest common divisor of $P,Q\in{\mathbb F}[D]$ is the product of the monomials $D-\lambda\operatorname{Id}$ where $\lambda\in\overline {\mathbb F}$ is a common eigenvalue of $P$ and $Q$.

\begin{dfn} We define the \emph{greatest common divisor} of $(P_1,k_1),\dots,(P_n,k_n)\in{\mathbb F}_*[D]\times{\mathbb Z}$ as
\[ \operatorname{gcd}\{(P_1,k_1),\dots,(P_n,k_n)\}=(\operatorname{gcd}\{P_1,\dots,P_n\},\nu(k_1,\dots,k_n)),\] 
where 
\[ \nu(k_1,\dots,k_n)=\begin{dcases} \min\{k_1,\dots,k_n\}, & k_j\geqslant 0;\ j=1,\dots,n,\\\max\{k_1,\dots,k_n\}, & k_j\leqslant 0;\ j=1,\dots,n,\\ 0, & \text{otherwise.}\end{dcases}\] 
\end{dfn}

For $L=\varphi^*P+Q\in{\mathbb F}[D,D^{-1},\varphi^*]$ with $P,Q\in{\mathbb F}[D,D^{-1}]$ let \[ \overline L:=\operatorname{gcd}(P,\varphi^*(Q)).\] 
By construction, $\overline L|P$ and $\overline L|\varphi^*(Q)$. Let $\widetilde P=P/\overline L$ and $\widetilde Q=\varphi^*(Q)/\overline L$.

Using the above expressions and the algebraic structure, we can improve Theorem~\ref{T::RL1} in the following way --cf. \cite[Theorem 2.3]{CaTo}.

\begin{thm}\label{T::RL2}
	Take $L$, $\widetilde P$ and $\widetilde Q$ as above and define
	$\widetilde R:=\varphi^*\widetilde P-\widetilde Q\in{\mathbb F}[D,D^{-1},\varphi^*]$. Then $L\widetilde R\in{\mathbb F}[D,D^{-1}]$.
\end{thm}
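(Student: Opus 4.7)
The plan is to imitate the proof of Theorem~\ref{T::RL1}: expand $L\widetilde R$ into four composition terms, normalize all $\varphi^*$ factors to the left using the commutation rule $P\varphi^*=\varphi^*\varphi^*(P)$ and $(\varphi^*)^2=\operatorname{Id}$, and then show that the two terms still containing a free $\varphi^*$ cancel exactly because of the way $\widetilde P$ and $\widetilde Q$ are defined.

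Concretely, I would first write
\[
L\widetilde R=(\varphi^*P+Q)(\varphi^*\widetilde P-\widetilde Q)=\varphi^*P\varphi^*\widetilde P-\varphi^*P\widetilde Q+Q\varphi^*\widetilde P-Q\widetilde Q.
\]
Applying the commutation rule to $P\varphi^*$ in the first term and to $Q\varphi^*$ in the third term, and then using $(\varphi^*)^2=\operatorname{Id}$, the expression rearranges as
\[
L\widetilde R=\varphi^*(P)\,\widetilde P-Q\,\widetilde Q\;+\;\varphi^*\bigl(\varphi^*(Q)\,\widetilde P-P\,\widetilde Q\bigr).
\]
The first two terms lie in $\mathbb F[D,D^{-1}]$, so it remains to show that the parenthesis in the $\varphi^*$ summand vanishes. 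Since $\overline L=\operatorname{gcd}(P,\varphi^*(Q))$ divides both $P$ and $\varphi^*(Q)$ (by construction of the gcd in $\mathbb F_*[D]\times\mathbb Z$), the definitions $\widetilde P=P/\overline L$ and $\widetilde Q=\varphi^*(Q)/\overline L$ give
\[
\varphi^*(Q)\,\widetilde P-P\,\widetilde Q=\varphi^*(Q)\,\frac{P}{\overline L}-P\,\frac{\varphi^*(Q)}{\overline L}=0
\]
by commutativity of $\mathbb F[D,D^{-1}]$, which closes the proof.

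The only delicate point, and what I expect to be the real obstacle, is making sure the divisions $P/\overline L$ and $\varphi^*(Q)/\overline L$ are legitimate elements of $\mathbb F[D,D^{-1}]$ rather than formal quotients. I would handle this by passing through the isomorphism $\Psi\colon\mathbb F[D,D^{-1}]\to\mathbb F_*[D]\times\mathbb Z$: writing $P=(P_*,k_P)$ and $\varphi^*(Q)=(Q_*,k_Q)$, the gcd $\overline L=(\operatorname{gcd}(P_*,Q_*),\nu(k_P,k_Q))$ has the property that its polynomial component divides both $P_*$ and $Q_*$ in $\mathbb F_*[D]$ and its integer component satisfies $\nu(k_P,k_Q)\le k_P$ (resp.\ $\le k_Q$) in the appropriate sense, so both quotients correspond to genuine Laurent polynomials. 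Once this is verified, the algebraic cancellation above is formal and the conclusion $L\widetilde R\in\mathbb F[D,D^{-1}]$ follows immediately.
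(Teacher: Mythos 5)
Your proof is correct and follows essentially the same route as the paper's: expand $L\widetilde R$ into four terms, push every $\varphi^*$ to the left via $P\varphi^*=\varphi^*\varphi^*(P)$, and cancel the two $\varphi^*$-terms using $P=\overline L\widetilde P$, $\varphi^*(Q)=\overline L\widetilde Q$ and commutativity of ${\mathbb F}[D,D^{-1}]$. Your extra care about the quotients being genuine Laurent polynomials is fine but already guaranteed by the paper's construction ($\overline L\,|\,P$ and $\overline L\,|\,\varphi^*(Q)$ by definition of the gcd, and the integer component is harmless since $D$ is a unit).
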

\begin{proof}
 \begin{align*}L\widetilde R= & (\varphi^* P+ Q)(\varphi^*\widetilde P-\widetilde Q)=\varphi^*\widetilde P\varphi^* P-Q\widetilde Q+Q\varphi^*\widetilde P-\varphi^* P\widetilde Q \\= & \varphi^*(\widetilde P) P-Q\widetilde Q+ \varphi^*\varphi^*(Q)\widetilde P-\varphi^*\overline L\widetilde P\widetilde Q=\varphi^*(\widetilde P) P-Q\widetilde Q+ \varphi^*[\overline L\widetilde Q\widetilde P-\overline L\widetilde P\widetilde Q]=\varphi^*(\widetilde P) P-Q\widetilde Q.\end{align*}
\end{proof}
\begin{rem} Unlike Theorem~\ref{T::RL1}, we do not have in Theorem~\ref{T::RL2} that $L\widetilde R=R\widetilde L$, but this commutativity is not in general necessary.
\end{rem}
\begin{rem}\label{remred}
	From previous Theorem, it is clear that, as in Theorem~\ref{T::RL1}, there exists a least $k\in\{0,1,2,\dots\}$ such that $ L\widetilde RD^k\in {\mathbb F} [D]$. From now on we will write $\overline R:=\widetilde RD^k$.
\end{rem}
\begin{exa}The first differential equation with reflection of which a Green's function was obtained was $x'(t)+mx(-t)=0$ for some $m\in{\mathbb R}$ \cite{Cab4}. This operator is a square root of the harmonic oscillator (in pretty much the same way Dirac's equation does with matrices) and presents very interesting properties. If we think of the analogous operator obtained by substituting $\widetilde D$ by forward difference operator $\Delta=D-\operatorname{Id}$ and $\widetilde \varphi$ by $\varphi$ we get 
$L=\Delta+m\varphi^*=D-\operatorname{Id}+m\varphi^*$. We have that $P=m\operatorname{Id}$, $Q=D-\operatorname{Id}$ and $\overline L=\operatorname{gcd}(m\operatorname{Id},D^{-1}-\operatorname{Id})=\operatorname{Id}$. Therefore, $\widetilde P=P$, $\widetilde Q=Q$ and $\widetilde R=R=\operatorname{Id}-D^{-1}+m\varphi^*$. Thus,
\[ L R= RL=(D-\operatorname{Id}+m\varphi^*)(\operatorname{Id}-D^{-1}+m\varphi^*)=D+D^{-1}+(m^2-2)\operatorname{Id}.\] 
Hence, if $Lu=0$ holds, so does $DRLu=0$ and we get the equation
\[ (D^2+(m^2-2)D+\operatorname{Id})u=0.\] 
The solutions of this equation, for $|m|>2$, are of the form
\[ u_n=c_1 2^{-n} \left(-m^2+|m|\sqrt{m^2-4}+2\right)^n+c_2 2^{-n} \left(-m^2-|m|\sqrt{m^2-4}+2\right)^n\] 
with $c_1,c_2\in{\mathbb R}$. In any case, $Lu=0$ has to hold, so we deduce that \[ c_2=\frac{1}{2} \left(\frac{|m|}{m}\sqrt{m^2-4}+m\right)c_1,\]  and all solutions of $Lu=0$ are expressed as
\[ u_n=c_1 \left[2^{-n} \left(-m^2+|m|\sqrt{m^2-4}+2\right)^n+\frac{1}{2} \left(\frac{|m|}{m}\sqrt{m^2-4}+m\right) 2^{-n} \left(-m^2-|m|\sqrt{m^2-4}+2\right)^n\right],\] 
for some $c_1\in{\mathbb R}$. We can study in an analogous fashion what happens in the case $m\in[-2,2]$.
\end{exa}
\begin{exa}\label{exac} Now instead of substituting $\widetilde D$ by $\Delta$ we do it by $D$, that is, we study the operator $L=D+m\varphi^*$. We have that $P=m\operatorname{Id}$, $Q=D$ and $\overline L=\operatorname{gcd}(m\operatorname{Id},D^{-1})=\operatorname{Id}$. Therefore, $\widetilde P=P$, $\widetilde Q=Q$ and $\widetilde R=R=-D^{-1}+m\varphi^*$. Thus,
\[ RL=L R=(D+m\varphi^*)(-D^{-1}+m\varphi^*)=(m^2-1)\operatorname{Id}.\] 
This means that if the equation $(D+m\varphi^*)u=0$ holds for some $u\in{\mathscr S}$, so does $(m^2-1)u=0$, which is only satisfied if $m=\pm1$. That is, $x_{k+1}-mx_{-k}=0$ is a recurrence relation with reflection with no solution for $m\ne \pm1$. In the case $m=\pm1$, the equation $LR u=0$ is trivial and provides no information on $Lu=0$.

In the case $L=D-\varphi^*$, take $(v_k)_{k\in{\mathbb N}}\subset{\mathbb F}$ arbitrarily and define $u_k=v_k$ if $k\in{\mathbb N}$ and $u_k=u_{1-k}$ if $k\leqslant0$. Clearly $u$ satisfies $Lu=0$. Analogously, if $L=D+\varphi^*$, take $(v_k)_{k\in{\mathbb N}}\subset{\mathbb F}$ arbitrarily and define $u_k=v_k$ if $k\in{\mathbb N}$ and $u_k=-u_{1-k}$ if $k\leqslant0$. $u$ satisfies $Lu=0$. 

\end{exa}

\subsection{Related Operators}

In this section we assume to work over a field of characteristic different from two.

In the theory of differential equations with reflection the even and odd part operators, defined respectively as
\[ (\widetilde Ef)(t):=\frac{f(t)+f(-t)}{2},\ (\widetilde O f)(t):=\frac{f(t)-f(-t)}{2},\] 
play an important role --cf. \cite{Toj3,CTMal,CaTo}. These linear operators satisfy, among others, the properties
\begin{align*}\widetilde E\widetilde D=&\widetilde D\widetilde O,\ \widetilde O\widetilde D=\widetilde D\widetilde E,\ \widetilde E\widetilde\varphi^*=\widetilde\varphi^*\widetilde E=\widetilde E,\ \widetilde O\widetilde\varphi^*= \widetilde\varphi^*\widetilde O=-\widetilde O,\\ \widetilde E+\widetilde O= & \operatorname{Id},\ \widetilde E\widetilde O=\widetilde O\widetilde E=0,\ \widetilde E^2=\widetilde E,\ \widetilde O^2= \widetilde O.\end{align*}

The power of the operators $\widetilde E$ and $\widetilde O$ relies on the fact that they are the projections onto the spaces of even and odd functions respectively.
Now our goal is to take these operators to the setting of ${\mathscr S}$. In order to do this, first observe that the operator $D$ is actually a pullback by the function $\tau(k)=k+1$, $k\in{\mathbb Z}$ and there are precisely two proper invariant subspaces of ${\mathscr S}$ of the map $\tau^2$. They are
\[ {\mathcal E}:=\{u\in{\mathscr S}\ :\ u_{2k+1}=0,\ k\in{\mathbb Z}\},\quad {\mathcal O}:=\{u\in{\mathscr S}\ :\ u_{2k}=0,\ k\in{\mathbb Z}\},\] 
so we actually want to deal with the projections onto those subspaces, which are defined, respectively,
\[ (Eu)_k:=\frac{1+(-1)^k}{2}u_k,\quad (Ou)_k:=\frac{1-(-1)^k}{2}u_k,\] 
for every $(u_k)_{k\in{\mathbb Z}}\in{\mathscr S}$. In order to arrive to $E$ and $O$ we could have used the help of the following map. Let \[ {\mathcal C}_{\mathcal A}:=\{u\in{\mathcal F}({\mathbb Z},{\mathbb C})\ |\ 0\leqslant\limsup_{k\to-\infty} |u_k|^{-\frac{1}{k}}<\limsup_{k\to\infty} |u_k|^{-\frac{1}{k}}\},\]  \[ {\mathcal L}_0:=\{f:B_{\mathbb C}[0,\rho_2]\backslash B_{\mathbb C}(0,\rho_1)\to{\mathbb C}\ |\ \rho_2>\rho_1>0,\ f \text{ is holomorphic}\}.\] 
The elements in ${\mathcal L}_0$ are those holomorphic functions which can be expressed as Laurent series and the elements in ${\mathcal C}_{\mathcal A}$ are the coefficients of those series. Hence, we can consider the bijection
\begin{center}\begin{tikzcd}[row sep=tiny]
{\mathcal C}_{\mathcal A} \arrow{r}{\Xi} & {\mathcal L}_0\\
(u_k)_{k\in{\mathbb Z}} \arrow[mapsto]{r} & \sum\limits_{k\in{\mathbb Z}} u_kx^k
\end{tikzcd}
\end{center}
This way, any operator $\widetilde Y$ on $ {\mathcal L}_0$ (such as can be the even and odd part operators) can be thought as an operator on ${\mathcal C}_{\mathcal A}$ by defining $Y:=\Xi^{-1}\widetilde Y\Xi$. It is easy to check that
\[ E=\Xi^{-1}\widetilde E\Xi,\quad O=\Xi^{-1}\widetilde O\Xi.\] 
\begin{rem}\label{remlam} Observe that $\Lambda=\Xi^{-1}\widetilde \varphi^*\Xi$ is also an involution in ${\mathcal C}_{\mathcal A}$ which is defined as $(\Lambda u)_k=(-1)^ku_k$. In this case $\Lambda$ is not the pullback by any function.
\end{rem}

By definition, it is clear that $E$ and $O$ hold similar properties to $\widetilde E$ and $\widetilde O$:
\[  E D= D O,\ O D= D E, E\varphi^*=\varphi^* E,\ O\varphi^*=\varphi^* O,\ E+ O=\operatorname{Id},\ E O= O E=0,\ E^2= E,\ O^2= O.\] 

We can even combine $E$, $O$, $\widetilde E$ and $\widetilde O$. To do this we can consider $\widetilde E$ and $\widetilde O$ as
\[ \widetilde E=\frac{1}{2}(\operatorname{Id}+\widetilde\varphi^*),\quad \widetilde O=\frac{1}{2}(\operatorname{Id}-\widetilde\varphi^*),\] 
and use the pullback by the inclusion $\iota:{\mathbb Z}\to{\mathbb R}$ to get
\[ \overline E:=\widetilde E\circ\iota^*:=\frac{1}{2}(\operatorname{Id}+\varphi^*),\quad \overline O:=\widetilde O\circ\iota^*:=\frac{1}{2}(\operatorname{Id}-\varphi^*),\] 
defined on ${\mathscr S}$. Thus considered, they have the properties
\begin{align*}\overline E\varphi^*=\varphi^*\overline E=\overline E,\ \overline O\varphi^*= \varphi^*\overline O=-\overline O,\ \overline E+\overline O= \operatorname{Id},\ \overline E\overline O=\overline O\overline E=0,\ \overline E^2=\overline E,\ \overline O^2= \overline O,\end{align*}
but observe that, unlike with $E$ and $O$, the properties $\overline E D= D\overline O$ and $\overline O D= D\overline E$ do not hold.

Observe also that $E$, $O$, $\overline E$ and $\overline O$ commute.

\subsubsection{The exponential map}
In this section we assume to work over a field of characteristic zero.

The reader might have already realized the striking similarity between the algebras ${\mathbb F}[D,D^{-1},\varphi^*]$ and ${\mathbb F}[\widetilde D,\widetilde\varphi^*]$. In fact, as we will see, there is connection between the operators $\widetilde D$ and $\widetilde \varphi^*$ in ${\mathbb F}[D,D^{-1},\varphi^*]$ with, respectively, the operators $D$ and $\varphi$ in ${\mathbb F}[\widetilde D,\widetilde\varphi^*]$ through the exponential map.

To show this, first remember that the exponential of the differential operator is, precisely, the right shift operator, that is, $e^{\widetilde D}=D$ --this fact was shown, symbolically, by Lagrange \cite[p. 13]{Jordan}. 

Observe that the exponential of the derivative at a point $x\in{\mathbb F}$ is, formally,
\[ \delta_x e^{\widetilde D}:=\delta_x \sum_{k=0}^\infty\frac{\widetilde D^k}{k!},\]  
where $\delta_x$ is the Dirac delta distribution at $x$. Consider now the space of analytic functions ${\mathcal A}({\mathbb F})$. Then, for $f\in{\mathcal A}({\mathbb F})$ with a radius of convergence $r>1$ at $x\in{\mathbb F}$,

\[ \delta_x e^{\widetilde D}f=\delta_x \sum_{k=0}^\infty\frac{f^{(k)}}{k!}= \sum_{k=0}^\infty\frac{f^{(k)}(x)}{k!}=\sum_{k=0}^\infty\frac{f^{(k)}(x)}{k!}[(x+1)-x]^k=f(x+1)=\delta_x D f.\] 

So, it is clear that this fact that applies to certain analytic functions can be extended, as a definition, to ${\mathcal F}({\mathbb F},{\mathbb F})$ by defining $e^{\widetilde D}:=D$ and, whenever the exponential of the derivative makes sense as a distribution, it will coincide with our definition. Observe though that this extension is not unique in principle. In order to achieve that we would need to define a topology in ${\mathcal F}({\mathbb F},{\mathbb F})$ such that ${\mathcal A}({\mathbb F})$ is a dense subset.

We could have also shown that $e^{\widetilde D}=D$, formally, using the Fourier transform ${\mathfrak F}$:
\[ {\mathfrak F}^{-1}{\mathfrak F} e^{\widetilde D}={\mathfrak F}^{-1}{\mathfrak F}\left(\sum_{k=0}^\infty\frac{\widetilde D^k}{k!}\right)={\mathfrak F}^{-1}\left( \sum_{k=0}^\infty\frac{(2\pi i x)^k}{k!}\right) {\mathfrak F}={\mathfrak F}^{-1} e^{2\pi ix}{\mathfrak F}=D{\mathfrak F}^{-1}{\mathfrak F}=D,\] 

but this approach cannot be made rigorous due to the fact that $e^{\widetilde D}$ \emph{is not} a distribution. To undertake a proper study of this operator, it has to be done in the framework of \emph{hyperfunctions} \cite[Section 1.3.4]{graf}.

\begin{pro}[{\cite[Proposition 1.6]{graf}}] Let $a\in{\mathbb R}$. We have $e^{a\widetilde D}=D^a$.
\end{pro}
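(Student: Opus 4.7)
The plan is to replicate, for general $a \in \mathbb{R}$, the computation shown in the preceding paragraphs for the case $a=1$, and then appeal to the hyperfunction framework for the final extension. First I would work on the dense subspace $\mathcal{A}(\mathbb{R})$ of analytic functions: writing the formal exponential as
\[
e^{a\widetilde D} := \sum_{k=0}^\infty \frac{a^k \widetilde D^k}{k!},
\]
for any $f \in \mathcal{A}(\mathbb{R})$ whose Taylor series at $x$ has radius of convergence strictly greater than $|a|$, Taylor's theorem gives
\[
\delta_x e^{a\widetilde D} f = \sum_{k=0}^\infty \frac{a^k f^{(k)}(x)}{k!} = \sum_{k=0}^\infty \frac{f^{(k)}(x)}{k!}\bigl[(x+a)-x\bigr]^k = f(x+a) = \delta_x D^a f,
\]
so that $e^{a\widetilde D}$ coincides with $D^a$ pointwise on all analytic $f$ with the appropriate convergence radius at $x$, exactly mimicking the $a=1$ argument given just above the proposition.

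Next I would extend the equality to the whole of $\mathcal{F}(\mathbb{R},\mathbb{R})$ by taking $e^{a\widetilde D}:=D^a$ as a definition, in the same spirit as the text does for $a=1$. As the authors note, the only way to make this canonical is to place the ambient space inside a topology in which $\mathcal{A}(\mathbb{R})$ is dense; the natural such framework is that of Sato hyperfunctions. In the formalism of \cite[Section 1.3.4]{graf}, a hyperfunction on $\mathbb{R}$ is represented by a pair of holomorphic defining functions on half-neighbourhoods of $\mathbb{R}$ in $\mathbb{C}$, and horizontal translation by $a$ of these defining functions is a well-defined operation, which is exactly the action of $D^a$ on the hyperfunction.

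The main obstacle is thus to identify the formal series $\sum_{k\geqslant 0} a^k \widetilde D^k / k!$ acting on such a defining pair with the translate by $a$: since $\widetilde D$ acts as complex differentiation on the defining functions and these are holomorphic, the series converges uniformly on compact subsets of the domain of holomorphy to the horizontal translation by $a$, by the classical identity principle for complex Taylor series. Passing to equivalence classes of defining pairs then yields $e^{a\widetilde D} = D^a$ as operators on hyperfunctions, and hence the claimed equality on $\mathcal{F}(\mathbb{R},\mathbb{R})$ after identifying each function with its hyperfunction representative. The subtlety glossed over by the formal Fourier argument recalled before the proposition — that $e^{a\widetilde D}$ is not a tempered distribution — is precisely what this hyperfunction representation resolves.
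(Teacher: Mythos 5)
The paper does not actually prove this proposition: it is imported verbatim from Graf's book, and the surrounding text only supplies the formal Taylor--series heuristic for $a=1$ together with the warning that a rigorous treatment must take place in the hyperfunction framework. Your first paragraph generalises that heuristic to arbitrary $a$ correctly and in exactly the paper's spirit (radius of convergence greater than $|a|$ replacing greater than $1$), and reading $e^{a\widetilde D}:=D^a$ as a definition on all of ${\mathcal F}({\mathbb R},{\mathbb R})$ is precisely what the text does for $a=1$. Up to that point you match the source.

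The hyperfunction paragraph, however, contains a genuine gap. A hyperfunction is represented by defining functions holomorphic only on half-neighbourhoods $\{0<\pm\operatorname{Im} z<\epsilon\}$ of the real axis, and $\epsilon$ may be arbitrarily small. The Taylor series of such an $F$ centred at $z$ converges only on the largest disc about $z$ to which $F$ extends holomorphically, and that radius is bounded by $\operatorname{Im} z<\epsilon$; so for $|a|\geqslant\epsilon$ the series $\sum_k a^kF^{(k)}(z)/k!$ need not converge at all, let alone to $F(z+a)$. The ``identity principle'' you invoke gives uniqueness of analytic continuation, not convergence of a Taylor series beyond its disc of convergence. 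Equivalently, the symbol $e^{a\zeta}$ is of exponential type $|a|>0$, so $e^{a\widetilde D}$ is not one of the infinite-order \emph{local} differential operators that may legitimately be applied termwise to defining functions. Hence the identification of $e^{a\widetilde D}$ with translation cannot be obtained by summing the series on a defining pair; it must either be restricted to defining functions holomorphic on a strip of width exceeding $|a|$, or be taken as the \emph{definition} of $e^{a\widetilde D}$ on hyperfunctions, justified by the analytic case --- which is consistent with the paper's own caveat that the extension from ${\mathcal A}({\mathbb F})$ ``is not unique in principle.'' If you want a self-contained argument, state the result in one of those two weaker forms rather than claiming convergence on every defining pair.
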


In a similar way, we can compute $e^{a\widetilde \varphi^*}$ for $a\in{\mathbb C}$ taking into account that $\widetilde\varphi|_{\mathbb Z}=\varphi$.
\[ e^{a\widetilde\varphi^*}=\sum_{k=0}^\infty\frac{ (a\widetilde\varphi^*)^k}{k!}= \sum_{k=0}^\infty\frac{a\operatorname{Id}}{(2k)!}+ \sum_{k=0}^\infty\frac{ a\widetilde\varphi^*}{(2k+1)!}=\cosh(a)\operatorname{Id}+\sinh(a)\varphi^*\in {\mathbb C}[D,D^{-1},\varphi^*].\] 

Analogously, we obtain \emph{Euler's formula}:
\[ e^{\widetilde\varphi^*\widetilde D}= \sum_{k=0}^\infty\frac{(\widetilde\varphi^*\widetilde D)^k}{k!}= \sum_{k=0}^\infty\frac{(-1)^k\widetilde D^{2k}}{(2k)!} + \widetilde\varphi^*\sum_{k=0}^\infty\frac{(-1)^k\widetilde D^{2k+1}}{(2k+1)!} =\cos(\widetilde D)+\widetilde\varphi^*\sin(\widetilde D).\] 

Observe that this last expression does not belong to ${\mathbb F}[D,D^{-1},\varphi^*]$. In general, for ${\mathbb F}={\mathbb C}$ and $a\in{\mathbb C}$,
\[ e^{a\widetilde\varphi^*\widetilde D}=\cos(a\widetilde D)+\widetilde\varphi^*\sin(a\widetilde D).\] 

Taking into account that $e^{\widetilde D\widetilde\varphi^*}=e^{-\widetilde\varphi^*\widetilde D}=\cos(\widetilde D)-\widetilde\varphi^*\sin(\widetilde D)$ we have that
\[ \cos(\widetilde D)=\frac{1}{2}\left( e^{\widetilde\varphi^*\widetilde D}+e^{\widetilde D\widetilde\varphi^*}\right) ,\quad \sin(\widetilde D)=\frac{1}{2}\widetilde\varphi^*\left( e^{\widetilde\varphi^*\widetilde D}-e^{\widetilde D\widetilde\varphi^*}\right) .\] 
Analogously, for ${\mathbb F}={\mathbb C}$,
\[ \cosh(\widetilde D)=\frac{1}{2}\left( e^{i\widetilde\varphi^*\widetilde D}+e^{i\widetilde D\widetilde\varphi^*}\right) ,\quad \sinh(\widetilde D)=-\frac{i}{2}\widetilde\varphi^*\left( e^{i\widetilde\varphi^*\widetilde D}-e^{i\widetilde D\widetilde\varphi^*}\right) ,\] 
so
\begin{align*}D= &e^{\widetilde D}= \frac{1}{2}\left( e^{i\widetilde\varphi^*\widetilde D}+e^{i\widetilde D\widetilde\varphi^*}\right) -\frac{i}{2}\widetilde\varphi^*\left( e^{i\widetilde\varphi^*\widetilde D}-e^{i\widetilde D\widetilde\varphi^*}\right) ,\\
D^{-1}= & e^{-\widetilde D}=\frac{1}{2}\left( e^{i\widetilde\varphi^*\widetilde D}+e^{i\widetilde D\widetilde\varphi^*}\right) +\frac{i}{2}\widetilde\varphi^*\left( e^{i\widetilde\varphi^*\widetilde D}-e^{i\widetilde D\widetilde\varphi^*}\right) .\end{align*}
Hence,
\[ D+D^{-1}=e^{i\widetilde\varphi^*\widetilde D}+e^{i\widetilde D\widetilde\varphi^*},\quad D-D^{-1}=-i\varphi^*\left( e^{i\widetilde\varphi^*\widetilde D}-e^{i\widetilde D\widetilde\varphi^*}\right) ,\] 
and therefore $i\varphi^*(D-D^{-1})=e^{i\widetilde\varphi^*\widetilde D}-e^{i\widetilde D\widetilde\varphi^*}$. Thus, we obtain
\[ e^{i\widetilde\varphi^*\widetilde D}=\frac{1}{2}\left( D+D^{-1}+i\varphi^*(D-D^{-1})\right) \in{\mathbb C}[D,D^{-1},\varphi^*].\] 
More generally, for $k\in{\mathbb Z}$,
\[ e^{ik\widetilde\varphi^*\widetilde D}=\frac{1}{2}\left( D^k+D^{-k}+i\varphi^*(D^k-D^{-k})\right) \in{\mathbb C}[D,D^{-1},\varphi^*].\] 

We have shown that, in general, exponentials of the operators in ${\mathbb F}[\widetilde D,\widetilde\varphi^*]$ do not end up in ${\mathcal F}[D,D^{-1},\varphi^*]$, but there are some instances where this is the case and we obtain some interesting relations.
\section{Green's functions}
After the reduction of an operator $L\in{\mathcal F}[D,D^{-1},\varphi^*]$ (Theorem~\ref{T::RL2} and Remark~\ref{remred}) we are left with a recurrence equation of the kind $Sx=0$ with $S\in{\mathbb F}[D]$. In the case of initial conditions it is simple to compute the Green's function. Several results in this direction, stated in different settings, can be found in the classic literature on the subject; see, for instance, \cite[Theorem~11, Chap.~4]{Mill2}, \cite[Section~2.11]{Ag}, \cite[Theorem~2.1]{Mill} or \cite[Theorem~6.8]{Kell}. The differences among these works are due to the operator being studied ($D$, $D^{-1}$ or $\Delta$), whether we consider functions of one real variable (difference equations) or sequences (recurrence relations) as solutions, and the way the authors state the conditions the equation is subject to --see \cite[Section~2.11]{Ag} or \cite[Theorem~2.1]{Mill}. Here we present a version (Theorem~\ref{printh}) which is adequate for our purposes.

\begin{notation}\label{Not::1}
Consider the homogeneous recurrence relation
\begin{equation}
\label{Ec::Shom}(S\,x)_k=\sum_{l=0}^n a_lx_{k+l}=0,\ k\in{\mathbb Z},
\end{equation}	
where $a_l\in{\mathbb F}$, $l=0,\dots,n$ and $a_0a_n\neq 0$. Denote the characteristic polynomial as follows:
\[ p(t)=a_nt^n+a_{n-1}t^{n-1}+\cdots+a_1t+a_0.\] 
Consider the set of different roots of $p$ in $\overline{\mathbb F}$, that is $\{\lambda_1,\dots,\lambda_r\}$ with $r\leqslant n$ and $\lambda_l\neq\lambda_j$ if $l\neq j$.
For each $l\in\{1,\dots,r\}$, denote $h_l$ the multiplicity of the root $\lambda_l$. If $r=n$, then all the roots are of multiplicity $h_l=1$ for $l\in\{1,\dots, n\}$ and the general solution of~\eqref{Ec::Shom} in $\overline{\mathscr S}:={\mathcal F}({\mathbb Z},\overline {\mathbb F})$ is given by
\[ u=k_1y_1+k_2y_2+\cdots+k_ny_n,\] 
where ${(y_l)}_k=\lambda_l^k$, $k_l\in\overline{\mathbb F}$ for $k\in{\mathbb Z}$ and $l=1,\dots,n$.

Now, if $r<n$, then there exists $l\in\{1,\dots,r\}$ such that $h_l>1$. In such a case, the general solution of~\eqref{Ec::Shom} in $\overline{\mathbb F}$ is:
\begin{equation*}u=k_1y_{1,1}+k_2y_{1_2}+\cdots+k_{h_1}y_{1,h_1}+k_{h_1+1}y_{2,1}+\cdots+k_ny_{r,h_r},\end{equation*}
where $\left( y_{l,1}\right) _k=\lambda_l^k$, $\left( y_{l,j}\right) _k=k^{j-1}\lambda_l^k$ for $k\in{\mathbb Z}$, $l\in\{1,\dots,r\}$ and $j\in\{2,\dots,h_l\}$.

If we denote $\Phi=\Big(\begin{array}{ccccccc}
y_{1,1}&y_{1,2}&\cdots&y_{1,h_1}&y_{2,1}&\cdots&y_{r,h_r}
\end{array}\Big)\in{\mathcal M}_{{\mathbb Z}\times n}(\overline{\mathbb F})$ and $K=(k_j)_{j=1}^n\in\overline{\mathbb F} ^n$, we can express the general solution of~\eqref{Ec::Shom} in $\overline{\mathscr S}$ as
$u=\Phi\,K$.
\end{notation}

Observe that, so far, we have obtained solutions in $\overline{\mathscr S}$ not necessarily in ${\mathscr S}$. Nevertheless, we know that, given initial conditions $x_j\in{\mathbb F},\ j=0,\dots, n-1$, by recurrence, problem~\eqref{procfd3} has a unique solution in ${\mathscr S}$. In fact, this means that we can construct $\Phi$ such that $y_{k,j}=\delta_k^j$ for $k,j\in\{0,\dots,n-1\}$ (where $\delta_k^j$ is the Kronecker delta function) just by imposing some the adequate initial conditions.

For the next theorem we define the following disjoint subsets of ${\mathbb Z}^2$ --see Figure~\ref{fig:points}.
\begin{align*} A_1:= &\{(k,j)\in{\mathbb Z}^2\ :\ k> j\geqslant0\}, & A_2:= & \{(k,j)\in{\mathbb Z}^2\ :\ k+1-n\leqslant j<0\},\\
A_3:= &\{(k,j)\in{\mathbb Z}^2\ :\ k+1-n> j,\ j<0\}, & A_4:= &\{(k,j)\in{\mathbb Z}^2\ :\ k\leqslant j,\ j\geqslant0\}.
\end{align*}
Observe that ${\mathbb Z}^2=A_1\sqcup A_2\sqcup A_3\sqcup A_4$.

\begin{figure}[h]
\centering
\includegraphics[width=0.7\linewidth]{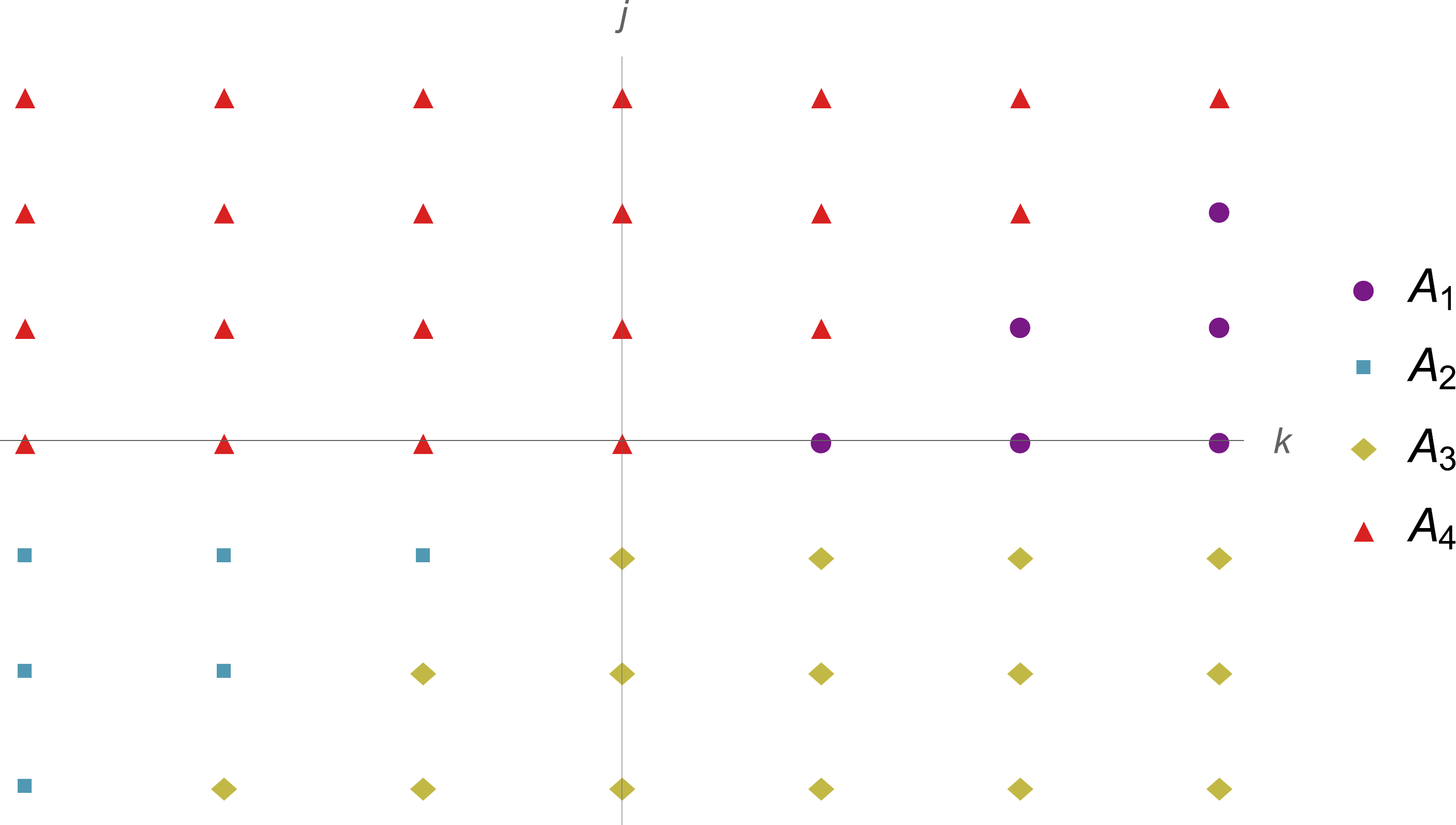}
\caption{The disjoint subsets of ${\mathbb Z}^2$, $A_1,\dots,A_4$.}
\label{fig:points}
\end{figure}

\begin{thm}\label{printh}
Consider the problem
\begin{equation}\label{procfd3}(Sx)_k=\sum_{j=0}^n a_jx_{k+j}=c_k,\ k\in{\mathbb Z},\quad x_j=0,\ j=0,\dots, n-1.
\end{equation}
where $ a_j\in{\mathbb F}$, $j=0,\dots,n$, $ a_0 a_n\ne0$, $c_k\in{\mathbb F}$, $k\in{\mathbb Z}$. Then there is a unique solution of problem~\eqref{procfd3} $u=(u_k)_{k\in{\mathbb Z}}\in{\mathscr S}$ where
\begin{equation*}u_k=\sum_{j\in{\mathbb Z}}\displaystyle H_{k,j}c_{j}\in{\mathbb F},\end{equation*}
 $(H_{k,j})_{k,j\in{\mathbb Z}}\subset{\mathbb F}$ is the \emph{Green's function} given by
\begin{equation}\label{Gfd1}H_{k,j}:=\begin{dcases}
\dfrac{(-1)^{n-1}}{ a_nC_{j+1}}{\widetilde H}_{k,j},& (k,j)\in A_1,\\
\dfrac{1}{ a_0C_{j}}{\widetilde H}_{k,j},& (k,j)\in A_2,\\
0,& (k,j)\in A_3\sqcup A_4,
\end{dcases}\end{equation}
with
\[  {\widetilde H}_{k,j}:=\begin{vmatrix}
y_{1,k} & \cdots & y_{n,k} \\ 
y_{1,j+1} & \cdots & y_{n,j+1} \\ 
y_{1,j+2} & \cdots & y_{n,j+2} \\ 
\vdots & \ddots & \vdots \\ 
y_{1,j+n-1} & \cdots & y_{n,j+n-1} \\ 
\end{vmatrix}, \] 
where $C_j:=\widetilde H_{j,j}$ is the \emph{Casorati} and $\{y_1,\dots,y_n\}$ is a set of fundamental solutions of the homogeneous problem associated to~\eqref{procfd3} such that $y_{k,j}=\delta_k^j$ for $k,j\in\{0,\dots,n-1\}$.
\end{thm}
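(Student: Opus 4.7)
The plan is to verify directly that the sequence $u_k := \sum_{j \in \mathbb{Z}} H_{k,j} c_j$ (which is a finite sum for each $k$, because $H_{k,j}$ vanishes outside a band in $j$) satisfies both $Su = c$ and the initial conditions, after first settling existence and uniqueness by elementary means. Since $a_0 a_n \neq 0$, the relation $(Sx)_k = c_k$ can be solved both forward (for $x_{k+n}$ in terms of $x_k, \ldots, x_{k+n-1}, c_k$) and backward (for $x_k$ in terms of $x_{k+1}, \ldots, x_{k+n}, c_k$). Prescribing $x_0 = \cdots = x_{n-1} = 0$ therefore uniquely determines $x_k$ for every $k \in \mathbb{Z}$, giving existence and uniqueness in $\mathscr S$. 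Thus it suffices to check that the proposed formula for $u$ works.

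Next I would swap the order of summation:
\[
(Su)_k = \sum_{l=0}^n a_l u_{k+l} = \sum_{j \in \mathbb{Z}} \Bigl( \sum_{l=0}^n a_l H_{k+l,j} \Bigr) c_j,
\]
so everything reduces to proving the identity $\sum_{l=0}^n a_l H_{k+l,j} = \delta_{k,j}$. The key observation is that for $j$ fixed, expanding $\widetilde H_{k,j}$ along its first row presents $\widetilde H_{k,j}$ as an $\overline{\mathbb F}$-linear combination of $y_{1,k}, \ldots, y_{n,k}$; hence $k \mapsto \widetilde H_{k,j}$ lies in the kernel of $S$. Consequently, whenever the chain $(k+l, j)_{l=0}^n$ stays inside a single region among $A_1, A_2, A_3 \sqcup A_4$, the sum vanishes identically.

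The substance of the argument is therefore the boundary case, which is where the piecewise definition of $H$ switches. I would separate the two cases $j \geq 0$ and $j < 0$. For $j \geq 0$ the transition happens at $k = j$ (boundary between $A_4$ and $A_1$), and for $j < 0$ at $k = j + n - 1$ (boundary between $A_2$ and $A_3$). In both regimes, once $k \neq j$, I would check that all the surviving determinants $\widetilde H_{k+l,j}$ with $l$ in the admissible range either fall into the vanishing region or are of the form $\widetilde H_{j+l',j}$ with $l' \in \{1, \ldots, n-1\}$, in which case they have two coincident rows and therefore vanish. Only the diagonal case $k = j$ survives: for $j \geq 0$ one must compute $a_n H_{j+n,j}$, using that $\widetilde H_{j+n,j}$ is the Casorati $C_{j+1}$ with its last row cyclically moved to the top, producing the sign $(-1)^{n-1}$, and the normalization is precisely arranged so that $a_n H_{j+n,j} = 1$; dually for $j < 0$ one gets $a_0 H_{j,j} = a_0 \cdot \frac{1}{a_0 C_j} \widetilde H_{j,j} = 1$.

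Finally, for the initial conditions I would verify that $H_{k,j} = 0$ whenever $k \in \{0, \ldots, n-1\}$ and $j$ ranges over all of $\mathbb{Z}$. The regions $A_3, A_4$ are covered by definition. For $(k,j) \in A_1$ with $0 \leq j < k \leq n-1$, the first row of $\widetilde H_{k,j}$ is $y_{\cdot, k}$ and the subsequent rows are $y_{\cdot, j+1}, \ldots, y_{\cdot, j+n-1}$; since $k \in \{j+1, \ldots, j+n-1\}$, this determinant has two equal rows and vanishes. The analogous argument (using $k + 1 - n \leq j < 0 \leq k \leq n-1$, which again forces $k \in \{j+1, \ldots, j+n-1\}$) handles $(k,j) \in A_2$. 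I expect the main obstacle to be bookkeeping: keeping the four regions straight, matching the signs on the two branches of the formula, and making sure the diagonal calculation $\widetilde H_{j+n,j} = (-1)^{n-1} C_{j+1}$ yields the correct normalization rather than its negative.
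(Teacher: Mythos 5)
Your proposal is correct and follows essentially the same route as the paper: establish uniqueness by forward/backward recursion from $a_0a_n\neq 0$, reduce to the identity $\sum_{l=0}^n a_l H_{k+l,j}=\delta_k^j$ via the two facts that $k\mapsto\widetilde H_{k,j}$ lies in $\ker S$ and that $\widetilde H_{j+m,j}=0$ for $m\in\{1,\dots,n-1\}$ while $\widetilde H_{j+n,j}=(-1)^{n-1}C_{j+1}$, and then check the initial conditions by the equal-rows argument. The only point you leave implicit is that the Casoratians $C_j$ are nonzero for every $j\in{\mathbb Z}$ (needed for $H$ to be well defined), which the paper gets from $C_0=1$ and the recursion $C_{j+1}=(-1)^n a_n C_j$.
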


\begin{proof}
First, by definition of $\{y_1,\dots,y_n\}$, we have that $C_0=1$. Furthermore, we can prove that $C_{j+1}=(-1)^na_nC_j$ for every $j\geqslant0$ \cite[Theorem 3.8]{Mickens}, so $C_{j}\ne 0$ for every $j\in{\mathbb Z}$. By a similar argument, $C_j\ne0$ for every $j<0$. Hence, $H_{k,j}$ is well defined for every $k,j\in{\mathbb Z}$.

 From the definition of $\widetilde H_{k,j}$, we deduce that, for $k\in{\mathbb Z}$,
\begin{equation}\label{Ec::H0} \widetilde H_{k+n,k} =(-1)^{n-1}C_{j+1};\quad \widetilde H_{k+l,k} =0,\ l\in\{0,\dots,n-1\}.
\end{equation}

First, we will see that $\sum_{l=0}^n a_lH_{k+l,j}=\delta_k^j$ for every $k,j\in{\mathbb Z}$. In order to achieve this we will study six different cases --see Figure~\ref{fig:points2}.
\begin{figure}[h]
\centering
\includegraphics[width=0.7\linewidth]{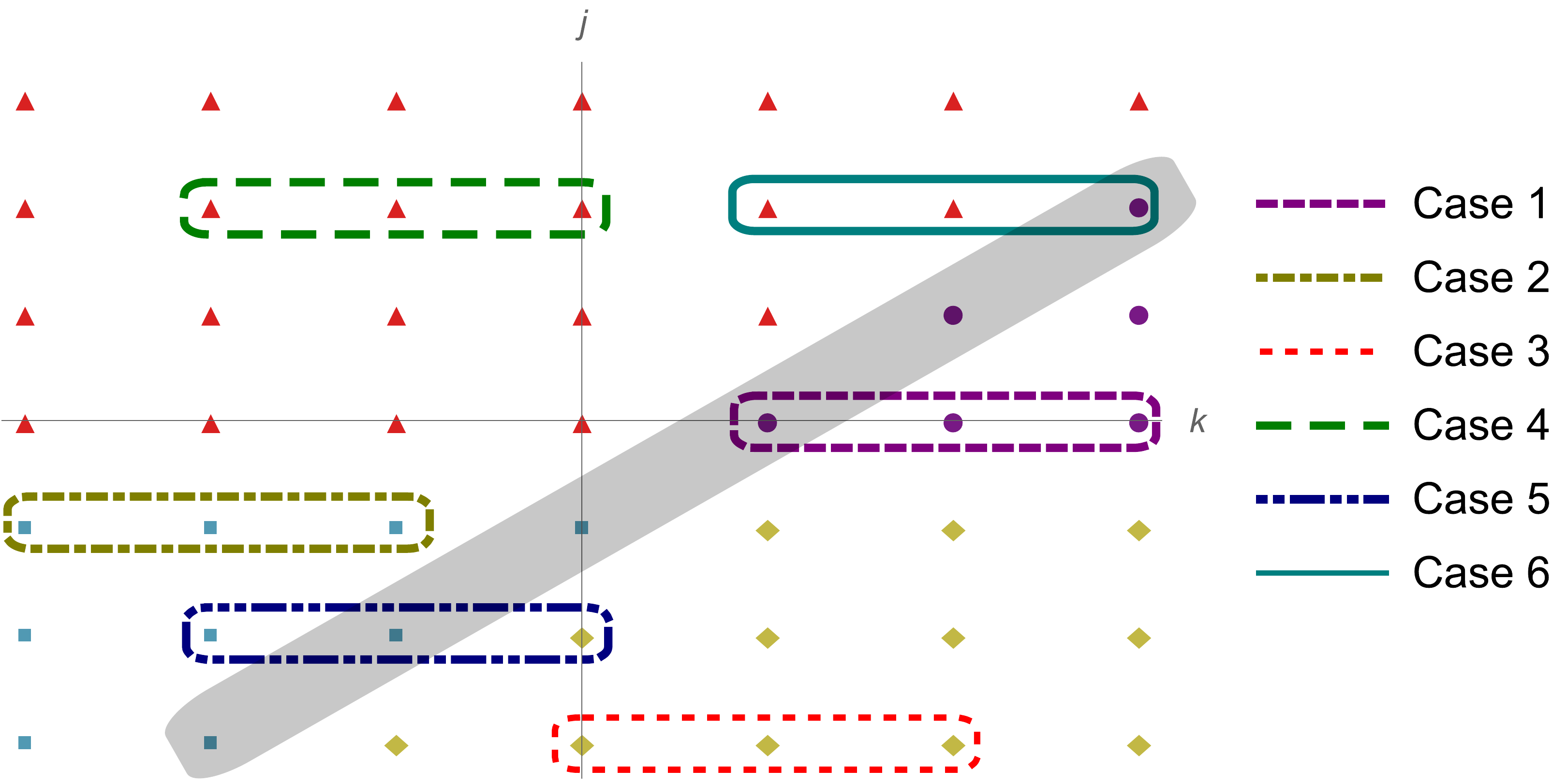}
\caption{Illustration for $n=2$. Each rectangle shows the set of indices $(k+l,j)$ where $l\in\{0,1,2\}$ for $(k,j)$ in one of the six cases. In each case the definition of $H_{k+l,j}$ is different. The shaded area covers those points where $H_{k+l,j}=0$ because of~\eqref{Ec::H0}.}
\label{fig:points2}
\end{figure}

\emph{Case 1:} $(k,j)\in A_1$. In this case $(k+l,j)\in A_1$ for every $l\in\{0,\dots,n\}$ so
\begin{equation*}\sum_{l=0}^n a_lH_{k+l,j}=\frac{(-1)^{n-1}}{ a_nC_{j+1}}\begin{vmatrix}
\sum\limits_{l=0}^n a_ly_{1,k+l} & \cdots & \sum\limits_{l=0}^n a_ly_{n,k+l} \\y_{1,j+1} & \cdots & y_{n,j+1} \\ 
y_{1,j+2} & \cdots & y_{n,j+2} \\ 
\vdots & \ddots & \vdots \\ 
y_{1,j+n-1} & \cdots & y_{n,j+n-1}\\ 
\end{vmatrix}=0.\end{equation*}

\emph{Case 2:} $k+1\leqslant j<0$. In this case $(k,j)\in A_2$ and $(k+l,j)\in A_2$ for every $l\in\{0,\dots,n\}$, so
\begin{equation*}\sum_{l=0}^n a_lH_{k+l,j}=\frac{1}{ a_0C_{j+n-2}}\begin{vmatrix}
\sum\limits_{l=0}^n a_ly_{1,k+n-2+l} & \cdots & \sum\limits_{l=0}^n a_ly_{n,k+n-2+l} \\ 
y_{1,j+n-1} & \cdots & y_{n,j+n-1} \\ 
y_{1,j+n} & \cdots & y_{n,j+n} \\ 
\vdots & \ddots & \vdots \\ 
y_{1,j+2n-3} & \cdots & y_{n,j+2n-3}\\
\end{vmatrix}=0.\end{equation*}

\emph{Case 3:} $(k,j)\in A_3$. In this case $(k+l,j)\in A_3$ for every $l\in\{0,\dots,n\}$ so $H_{k+l,j}=0$ for every $l\in\{0,\dots,n\}$.

\emph{Case 4:} $k+n\leqslant j,\ j\geqslant0$. In this case $(k,j)\in A_4$ and $(k+l,j)\in A_4$ for every $l\in\{0,\dots,n\}$ so $H_{k+l,j}=0$ for every $l\in\{0,\dots,n\}$.

\emph{Case 5:} $k\in\{j,\dots,j+n-1\}$ and $j<0$. in this case $(k+l,j)\in A_2$ for $l\in\{0,\dots,j-k-1+n\}$ and $(k+l,j)\in A_4$ for $l\in\{j-k+n,\dots,n\}$. Hence, using~\eqref{Ec::H0},
\begin{align*}
\sum_{l=0}^n a_lH_{k+l,j}= & \sum_{l=0}^{j-k+n-1} a_lH_{k+l,j}=\sum_{l=0}^{j-k+n-1}\dfrac{ a_l}{ a_0C_{j}}{\widetilde H}_{k+l,j}=\sum_{l=0}^{j-k+n-1}\dfrac{ a_l}{ a_0C_{j}}{\widetilde H}_{j+(k-j+l),j} \\ = & \sum_{m=k-j}^{n-1}\dfrac{ a_{m+j-k}}{ a_0C_{j}}{\widetilde H}_{j+m,j}=\sum_{m=k-j}^{0}\dfrac{ a_{m+j-k}}{ a_0C_{j}}{\widetilde H}_{j+m,j}.
\end{align*}
Since $k-j\geqslant 0$, this last expression is $0$ if $k>j$ and, otherwise, $k=j$ and
\[ \sum_{l=0}^n a_lH_{k+l,j}=\dfrac{ a_{0}}{ a_0C_{j}}{\widetilde H}_{j,j}=1.\] 

\emph{Case 6:} $k\in\{j-n,\dots,j\}$ and $j\geqslant0$. in this case $(k+l,j)\in A_4$ for $l\in\{0,\dots,j-k\}$ and $(k+l,j)\in A_1$ for $l\in\{j-k+1,\dots,n\}$. Since $k\in\{j-n,\dots,j\}$, we have that $n-j+k+1\in\{0,\dots,n\}$ and, therefore, using~\eqref{Ec::H0},
\begin{align*}
\sum_{l=0}^n a_lH_{k+l,j}= &\sum_{l=j-k+1}^{n} a_lH_{k+l,j}=\sum_{l=j-k+1}^{n} a_l\dfrac{(-1)^{n-1}}{ a_nC_{j+1}}{\widetilde H}_{k+l,j}=\sum_{m=1}^{n-j+k} a_{j-k+m}\dfrac{(-1)^{n-1}}{ a_nC_{j+1}}\widetilde H_{j+m,j} \\ = & \sum_{m=n}^{n-j+k} a_{j-k+m}\dfrac{(-1)^{n-1}}{ a_nC_{j+1}}\widetilde H_{j+m,j}.
\end{align*}
This last expression is $0$ if $k<j$ and, otherwise, $k=j$ and 
\[ 
\sum_{l=0}^n a_lH_{k+l,j}= a_{n}\dfrac{(-1)^{n-1}}{ a_nC_{j+1}}\widetilde H_{j+n,j}=\dfrac{(-1)^{n-1}}{C_{j+1}}\widetilde H_{j+n,j}=1.\] 
Hence,
\[ \sum_{l=0}^n a_lH_{k+l,j}=\delta_k^j.\] 

Now we have that
\begin{align*}(Su)_k = & \sum_{l=0}^n a_l\left(\sum_{j\in{\mathbb Z}} H_{k+l,j}c_{j}\right)=\sum_{j\in{\mathbb Z}}\left( \sum_{l=0}^n a_l H_{k+l,j}\right) c_{j}=\sum_{j\in{\mathbb Z}}\delta_k^jc_{j}=c_k.
\end{align*}
Furthermore, for $k=0,\dots,n-1$ and $j<0$, either $(k,j)\in A_3$, and hence $H_{k,j}=0$, or $(k,j)\in A_2$, in which case $0\leqslant k\leqslant j+n-1$. Hence, $0<-j\leqslant k-j\leqslant n-1$, so $H_{k,j}=H_{j+(k-j),j}=0$. Thus, we can write
\[ u_k=\sum_{j\in{\mathbb Z}} H_{k,j}c_{j}=\sum_{j\geqslant 0} H_{k,j}c_{j}=\sum_{j= 0}^{k-1} H_{k,j}c_{j}=\sum_{j= 0}^{k-1} H_{j+(k-j),j}c_{j}=0.\] 
This last equality holds because $k-j\in\{1,\dots,k\}\subset\{1,\dots,n-1\}$ for any $j\in\{0,\dots,k-1\}$. Therefore, $u$ is a solution of problem~\eqref{procfd3}.

Finally, it is left to show that $(H_{k,j})_{k,j\in{\mathbb Z}}\subset{\mathbb F}$, but this has to be so because we already new, by recurrence, that problem~\eqref{procfd3} had a unique solution in ${\mathscr S}$. Hence, fix $j\in{\mathbb Z}$ and take $c_k=\delta_k^j$ for every $k\in{\mathbb Z}$. We have that $u_k=H_{k,j}\in{\mathbb F}$ for every $k\in{\mathbb Z}$, which ends the proof.
\end{proof}
\begin{rem} Similar results appear in the context of non-homogeneous generalized linear discrete time systems (see \cite[Corollary 3.1]{Da1} for a result in the field of order $n$ systems obtained through matrix pencil theory), or linear non-autonomous fractional $\nabla$-difference equations \cite[Theorem 2.1]{Da2}.
\end{rem}
Let us consider $H\in{\mathcal M}_{{\mathbb Z}\times {\mathbb Z}}$ defined as follows:
\begin{equation}
\label{Ec::DH} (H)_{k,j}=H_{k,j},\quad k,\,j\in{\mathbb Z},
\end{equation}
where $H_{k,j}$ is defined in~\eqref{Gfd1} for each $k$, $j\in{\mathbb Z}$. Using this notation, we can rewrite the previous result in a vectorial way.

\begin{cor}
	Consider the problem
	\begin{equation}\label{procfd3vec}Sx=\sum_{j=0}^na_jD^jx=c,\ x\in{\mathscr S},\quad (x)_j=0,\ j=0,\dots, n-1.
	\end{equation}
	where $a_j\in{\mathbb F}$, $j=0,\dots,n$, $a_0a_n\ne0$, $b\in {\mathscr S}$. Then there is a unique solution of problem~\eqref{procfd3vec} given by $u=H\,c$, 
	where $H$ is the \emph{Green's function} defined in~\eqref{Ec::DH}.
\end{cor}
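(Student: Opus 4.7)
The plan is to observe that this corollary is simply a compact restatement of Theorem~\ref{printh} in matrix-vector notation, so the proof reduces to checking that the two formulations coincide and that the matrix-vector product on the right-hand side is well-defined. The translation is direct: if $H=(H_{k,j})_{k,j\in{\mathbb Z}}$ and $c=(c_j)_{j\in{\mathbb Z}}\in{\mathscr S}$, then by definition $(Hc)_k=\sum_{j\in{\mathbb Z}}H_{k,j}c_j$, which is exactly the expression for $u_k$ furnished by Theorem~\ref{printh}.

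First I would address well-definedness of $(Hc)_k$. For fixed $k\in{\mathbb Z}$, the formula~\eqref{Gfd1} shows that $H_{k,j}=0$ whenever $(k,j)\in A_3\sqcup A_4$, so the only indices $j$ contributing to the sum are those with $(k,j)\in A_1\cup A_2$. Inspecting the defining inequalities, $A_1$ forces $0\leqslant j< k$ and $A_2$ forces $k+1-n\leqslant j<0$, both of which give a finite range of $j$ for each $k$. Hence $(Hc)_k$ is a finite sum of elements of ${\mathbb F}$, so $Hc\in{\mathscr S}$ and the expression makes sense without any convergence hypothesis on $c$.

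Next I would apply Theorem~\ref{printh} directly: the unique solution $u\in{\mathscr S}$ of problem~\eqref{procfd3vec} is given coordinatewise by $u_k=\sum_{j\in{\mathbb Z}}H_{k,j}c_j=(Hc)_k$ for every $k\in{\mathbb Z}$, which means precisely $u=Hc$. Uniqueness is inherited from Theorem~\ref{printh}, or equivalently can be read off from the recurrence together with the initial conditions $x_j=0$, $j=0,\dots,n-1$.

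There is essentially no obstacle: everything of substance was already established in Theorem~\ref{printh}. The only minor subtlety is the well-definedness check above, which is immediate from the support description of $H$ via the partition $A_1\sqcup A_2\sqcup A_3\sqcup A_4$ of ${\mathbb Z}^2$.
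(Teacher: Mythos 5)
Your proposal is correct and takes essentially the same route as the paper, which gives no separate proof and simply presents the corollary as the vectorial restatement of Theorem~\ref{printh}. Your additional observation that $(Hc)_k$ is a finite sum (since $H_{k,j}$ vanishes off $A_1\cup A_2$, each of which restricts $j$ to a bounded range for fixed $k$) is a correct and worthwhile well-definedness check that the paper leaves implicit.
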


\subsection{General boundary conditions}

From now on, given a vector space $V$ we denote by $V^*$ its algebraic dual. Consider the vector space ${\mathcal T}_n$ generated by those solutions of order $n$ equations of the form~\eqref{Ec::Shom}, that is

 \[ {\mathcal T}_n=\left\{\left( \sum_{j=1}^p\alpha_jk^{n_j}z_j^{k}\right) _{k\in{\mathbb Z}}\in{\mathscr S}\ : z_j\in\overline{\mathbb F},\ n_j\in\{0,1,\dots,n\},\ \alpha_j\in{\mathbb F};\ j=1,\dots,p;\ p\in{\mathbb N} \right\}.\] 
Observe that, by asking the sum to be in ${\mathscr S}$, we are assuming values in ${\mathbb F}$. Also, For every $L\in{\mathbb F}[D,D^{-1},\varphi^*]$, we have that $L(f)\in{\mathcal T}_n$ for every $f\in{\mathcal T}_n$.
\begin{cor}\label{corref}
Let $W\in ({\mathcal T}_n^*)^n$ and consider the problem
\begin{equation}\label{procfd4}\sum_{j=0}^na_jx_{k+j}=c_k,\ k\in{\mathbb Z},\quad Wx=h.
\end{equation}
where $a_j\in{\mathbb F}$, $j=0,\dots,n$, $a_0a_n\ne0$, $c_k\in{\mathbb F}$, $k\in{\mathbb Z}$, $h\in{\mathbb F}^n$. Then there is a unique solution of problem~\eqref{procfd4} in ${\mathcal T}_n$ if, and only if, $\det(W_\Phi)\neq 0$, where $W_\Phi:=W\,\Phi\in{\mathcal M}_{n}({\mathbb F})$, with $\Phi$ defined in Notation~\ref{Not::1}.

In such a case, the unique solution is given by:
\[ u=\Phi\,W_\Phi^{-1}h+\left( H-\Phi\,W_\Phi^{-1}W\,H\right) c,\] 
where $H$ is defined in~\eqref{Ec::DH} assuming $WHc$ is well defined.
\end{cor}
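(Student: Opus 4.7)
The plan is to reduce the boundary value problem~\eqref{procfd4} to a linear system on the coefficients of the general solution, in the usual spirit of parameter variation. By Theorem~\ref{printh}, the sequence $Hc$ is a particular solution of the non-homogeneous recurrence $Sx=c$ (it is the unique one vanishing at $j=0,\dots,n-1$). Since every solution of the homogeneous problem $Sx=0$ in $\mathcal{T}_n$ can be written as $\Phi K$ for some $K\in\overline{\mathbb F}^n$, the general solution of $Sx=c$ in $\mathcal{T}_n$ has the form
\[u = \Phi K + Hc,\qquad K\in\overline{\mathbb F}^n,\]
assuming $Hc\in\mathcal{T}_n$, which is exactly the hypothesis that $WHc$ is well defined on $\mathcal{T}_n$.

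Next I would impose the boundary conditions: applying $W$ (which is linear and defined on $\mathcal{T}_n$) to the above expression yields
\[Wu = W\Phi\,K + WHc = W_\Phi\,K + WHc.\]
Setting $Wu=h$ gives the linear system $W_\Phi K = h - WHc$ for $K$. This linear system has a unique solution in $\mathbb F^n$ for every choice of $h$ and $c$ precisely when the matrix $W_\Phi\in\mathcal M_n(\mathbb F)$ is invertible, i.e.\ $\det(W_\Phi)\neq 0$; this is the equivalence asserted in the corollary. In that case $K = W_\Phi^{-1}(h - WHc)$, and substituting back,
\[u = \Phi W_\Phi^{-1}(h - WHc) + Hc = \Phi W_\Phi^{-1} h + (H - \Phi W_\Phi^{-1}WH)c,\]
which is the claimed formula.

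The only nontrivial points to argue carefully are, first, that the particular solution $Hc$ produced by Theorem~\ref{printh} really does lie in $\mathcal{T}_n$ (so that $W$ can be evaluated on it); this is what the hypothesis ``$WHc$ is well defined'' absorbs, and it follows from the explicit form of $H_{k,j}$ in~\eqref{Gfd1}, which is built from the exponential-polynomial basis $y_1,\dots,y_n$ generating $\mathcal{T}_n$. Second, one must check that the solution $u$ one obtains is indeed an element of $\mathscr{S}$ (values in $\mathbb{F}$, not just $\overline{\mathbb{F}}$); this is automatic because the recurrence $Sx=c$ together with the initial values determined by $K$ produces a sequence in $\mathbb F$ by Theorem~\ref{printh}. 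The ``only if'' direction of the equivalence follows from the same system: if $\det(W_\Phi)=0$, then either $h-WHc\notin\operatorname{Im}W_\Phi$, in which case no solution exists, or $W_\Phi$ has a nontrivial kernel, giving a one-parameter family of solutions $u+\Phi K_0$ with $K_0\in\ker W_\Phi$. The main (mild) obstacle is only the bookkeeping to confirm that $Hc\in\mathcal{T}_n$ under the stated hypothesis; everything else is linear algebra on top of Theorem~\ref{printh}.
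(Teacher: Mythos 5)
Your proposal is correct and follows exactly the paper's own argument: write the general solution as $\Phi K + Hc$, impose $Wu=h$ to get the linear system $W_\Phi K = h - WHc$, and observe that unique solvability is equivalent to $\det(W_\Phi)\neq 0$. The extra remarks you add about $Hc$ lying in ${\mathcal T}_n$ and the values staying in ${\mathbb F}$ are consistent with the paper's hypotheses and do not change the route.
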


\begin{proof}
	Every solution of $S\,x=c$ is given by
	\[ u=\Phi\,K+H\,c,\quad K\in{\mathbb R}^n.\] 

	If we impose the condition given by $W$, we have the order $n$ linear system of equations
	\[ W\,u=W\,\Phi\,K+W\,H\,c=h.\] 

It is clear that there exist a unique solution of previous system if, and only if, \[ \det(W\,\Phi)=\det(W_\Phi)\neq 0,\] 

In such a case:
\[ K=W_\Phi^{-1}h-W_\Phi^{-1}W\,H\,c,\] 
thus
\[ u=\Phi\,W_\Phi^{-1}h+\left( H-\Phi\,W_\Phi^{-1}W\,H\right) c,\] 
and the result is proved.
\end{proof}
 \begin{rem}
	In Corollary~\ref{corref} we had to ask the compatibility between the boundary conditions and the equation in two instances. Fist, by asking for $W$ to be in $({\mathcal T}_n^*)^n$, $W\Phi$ was well defined. Second, the compatibility with the nonlinear part of the equations was guaranteed by asking that $WHc$ were well defined.
 	In the first case it would be enough for $W$ to be in the dual of the vector space of the solutions of the homogeneous problem associated to~\eqref{procfd4}, but this would require to know them aforehand.
 	\end{rem}

\begin{cor}Let $W\in ({\mathcal T}_n^*)^n$. Consider the problem 
\begin{equation}\label{rbvp2}Lx=c,\ Wx=h.
\end{equation}
where $L$ is defined as in~\eqref{difeqin}. Then, there exists $\overline R\in {\mathbb F}[D,\varphi^* ]$ --as in Remark~\ref{remred}-- such that $L\overline R\in{\mathbb F}[D]$ and a solution of problem~\eqref{rbvp2} is given by \[ u:= \Phi\,W_\Phi^{-1}h+\left( \overline RH-\Phi W_\Phi^{-1}W\overline RH\right) c\]  where $H$ is a Green's function associated to the problem
\begin{equation}\label{prosim}L\overline Rx=c,\ Wx=W\overline Rx=0,\end{equation} assuming it exists, $W\overline RHc$ is well defined, $\Phi$ is the general solution of $L\overline Rx=0$ and $W_\Phi:=W\,\Phi\in{\mathcal M}_{n}({\mathbb F})$ is invertible.
\end{cor}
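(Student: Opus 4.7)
The plan is to bootstrap off Corollary~\ref{corref} by absorbing the reflection via the algebraic reduction provided by Theorem~\ref{T::RL2} and Remark~\ref{remred}, in direct analogy with how Theorem~\ref{thmdei} handles differential equations with reflection. Concretely, I would first invoke those results to produce $\overline R \in {\mathbb F}[D,\varphi^*]$ with $S := L\overline R \in {\mathbb F}[D]$, turning the reflection equation $Lx = c$ into the pure-recurrence equation $Sy = c$ after the change of variable $x = \overline R y$.

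The key observation driving the argument is the transport identity $L(\overline R y) = Sy$: any sequence $y$ solving the pure-recurrence problem yields $x_p := \overline R y$ solving $Lx = c$. Accordingly, I would set $y := Hc$, where $H$ is the assumed Green's function of $Sy = c$ subject to $Wy = 0$ and $W\overline R y = 0$; this is a doubly-constrained scalar recurrence problem in ${\mathbb F}[D]$ handled in the spirit of Corollary~\ref{corref}. Then $x_p := \overline R H c$ satisfies $L x_p = c$ and $Wx_p = W\overline R y = 0$, so it is a particular solution of the reflection problem with vanishing boundary data. Finally, I would absorb the inhomogeneity $h$ with a homogeneous correction $\Phi K$ and set $u := x_p + \Phi K$. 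Imposing $Wu = h$ under the standing invertibility of $W_\Phi$ forces $K = W_\Phi^{-1}(h - W\overline R H c)$, and substituting reproduces the claimed expression
\[ u = \Phi W_\Phi^{-1} h + \bigl(\overline R H - \Phi W_\Phi^{-1} W \overline R H\bigr)c,\]
after which a direct cancellation verifies $Lu = c$ and $Wu = h$.

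The main obstacle is not computational but conceptual, and concerns the role of $\Phi$: the statement introduces $\Phi$ as a general solution of $L\overline R x = 0$, yet the verification of $Lu = c$ clearly demands $L\Phi = 0$. Since $\ker L \subset \ker (L\overline R)$, $\Phi$ must be interpreted as parametrizing the $n$-dimensional subspace of $\ker L$ sitting inside $\ker(L\overline R)$, and one must argue that such a choice is compatible with $W_\Phi$ being invertible. Once this ambiguity is pinned down, together with the hypothesis ensuring the existence of $H$ for the overdetermined problem~\eqref{prosim} and the well-definedness of $W\overline R H c$, the rest of the proof reduces to the substitution outlined above.
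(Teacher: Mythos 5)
Your proposal is correct and follows essentially the same route as the paper: the paper simply verifies the stated formula directly, using $L\overline R\in{\mathbb F}[D]$, $(L\overline R)(Hc)=c$, the boundary conditions $WH=W\overline RH=0$ of the Green's function, and the assertion $L\Phi=0$, which is exactly the computation your constructive derivation reproduces. Your flagged ``conceptual obstacle'' about $\Phi$ is well spotted --- the paper declares $\Phi$ to be the general solution of $L\overline Rx=0$ yet its proof uses $L\Phi=0$ without comment --- and your resolution (take $\Phi$ to parametrize the part of $\ker(L\overline R)$ lying in $\ker L$, compatibly with $W_\Phi$ invertible) is the right way to read the statement.
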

\begin{proof} First, we have that, since $L\Phi=0$,
\[ Lu =L\left( \Phi\,W_\Phi^{-1}h+\left( \overline RH-\Phi W_\Phi^{-1}W\overline RH\right) c\right) =\left( L\overline R\right) (H c)=\operatorname{Id} c=c.\] 
On the other hand, since $H$ is the Green's function of problem~\eqref{prosim}, it has to satisfy the boundary conditions, that is $WH=W\overline R H=0$ (to see this just take $h=(\delta_j^k)_{j\in{\mathbb Z}}$ for $k\in{\mathbb Z}$). Hence,
\[ Wu=W\left( \Phi\,W_\Phi^{-1}h+\left( \overline RH-\Phi W_\Phi^{-1}W\overline RH\right) c\right) =h,\] 
 so $u$ is a solution of problem~\eqref{rbvp2}.

\end{proof}

In the next section we will talk about systems, which will allow us to illustrate those cases where we can guarantee the solution of problem~\eqref{rbvp2} is unique.

\section{Linear systems of difference equations with reflection}

In this section we will consider the homogeneous system of linear difference equations
\begin{equation}\label{hlsystem}(Ju)_k:=Fx_{k+1}+Gx_{-k-1}+A x_k+Bx_{-k}=0,\ k\in{\mathbb Z},
\end{equation}
where $x_k\in{\mathbb F}^n$, $n\in{\mathbb N}$, $A,B,F,G\in{\mathcal M}_n({\mathbb F})$ and $u\in{\mathcal F}({\mathbb Z},{\mathbb F}^n)$. We will prove that a fundamental matrix for problem~\eqref{hlsystem} exists.

\begin{dfn} We say that
$M\in{\mathcal F}({\mathbb Z},M_n({\mathbb F}))$ is a \textit{fundamental matrix} of problem~\eqref{hlsystem} if $(u_k)_{k\in{\mathbb Z}}=(M(k)\,u_0))_{k\in{\mathbb Z}}$ is a solution of equation~\eqref{hlsystem} for every $u_0\in{\mathbb F}^n$, that is
\begin{equation*}FM(k+1)+GM(-k-1)+A M(k)+BM(-k)=0,\ k\in{\mathbb Z}.
\end{equation*}
\end{dfn}

\begin{dfn} If $M$ is a block matrix of the form
\[ M=\left( \begin{array}{c|c}
M_1 & M_2\\ \hline
M_3 & M_4
\end{array}\right) ,\] 
where $M_k\in{\mathcal M}_n({\mathbb F})$, we define $M_{(k)}:=M_k$.
\end{dfn}

\begin{thm}\label{thmexpfm}
Assume that
\[ \left( \begin{array}{c|c}
F & G\\ \hline
B & A
\end{array}\right) \text{ and } \left( \begin{array}{c|c}
A & B\\ \hline
G & F
\end{array}\right) \] 
are invertible. Then
\begin{equation*}M:=\left( \left[-\left( \begin{array}{c|c}
F & G\\ \hline
B & A
\end{array}\right) ^{-1}\left( \begin{array}{c|c}
A & B\\ \hline
G & F
\end{array}\right) \right]^k_{(1)}+\left[-\left( \begin{array}{c|c}
F & G\\ \hline
B & A
\end{array}\right) ^{-1}\left( \begin{array}{c|c}
A & B\\ \hline
G & F
\end{array}\right) \right]^k_{(2)}\right) _{k\in{\mathbb Z}}.\end{equation*}
 is a fundamental matrix of problem~\eqref{hlsystem}. Furthermore, problem~\eqref{hlsystem} equipped with the boundary condition $x_0=u_0\in{\mathbb F}^n$ has a unique solution given by $(u_k)_{k\in{\mathbb Z}}=(M(k)\,u_0))_{k\in{\mathbb Z}}$.
\end{thm}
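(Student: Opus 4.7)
My plan is to reduce the problem to a first-order matrix recursion by doubling the state. Setting $z_k := \begin{pmatrix} x_k \\ x_{-k}\end{pmatrix}$, equation~\eqref{hlsystem} evaluated at indices $k$ and $-k-1$ combines into the block recurrence
\[
\begin{pmatrix} F & G \\ B & A \end{pmatrix} z_{k+1} + \begin{pmatrix} A & B \\ G & F\end{pmatrix} z_k = 0.
\]
The first invertibility hypothesis rewrites this as $z_{k+1} = N z_k$ with $N := -\begin{pmatrix} F & G \\ B & A\end{pmatrix}^{-1}\begin{pmatrix} A & B \\ G & F\end{pmatrix}$, and the second invertibility hypothesis provides the inverse $N^{-1} = -\begin{pmatrix} A & B \\ G & F\end{pmatrix}^{-1}\begin{pmatrix} F & G \\ B & A\end{pmatrix}$, so $z_k = N^k z_0$ is well defined for every $k\in{\mathbb Z}$. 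The boundary condition $x_0 = u_0$ forces $z_0 = (u_0, u_0)^\top$, and reading the top block of $z_k$ produces the candidate $x_k = M(k)\, u_0$ with $M$ as in the statement.

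The delicate point is an internal consistency check: the bottom block of $z_k$ is supposed to represent $x_{-k}$, hence must agree with the top block of $z_{-k} = N^{-k} z_0$. Writing $P := \begin{pmatrix} F & G \\ B & A\end{pmatrix}$, $Q := \begin{pmatrix} A & B \\ G & F\end{pmatrix}$ and $J := \begin{pmatrix} 0 & I \\ I & 0\end{pmatrix}$ with $I$ the identity of ${\mathcal M}_n({\mathbb F})$, a direct block computation gives $JPJ = Q$, so that
\[
J N J = -(JPJ)^{-1}(JQJ) = -Q^{-1} P = N^{-1},
\]
whence $J N^k = N^{-k} J$. Since $J z_0 = z_0$ (the two blocks of $z_0$ coincide), we deduce $N^{-k} z_0 = J N^k z_0$; comparing top blocks shows that the two candidate formulas for $x_{-k}$ agree, i.e.\ $[N^k]_{(3)} + [N^k]_{(4)} = [N^{-k}]_{(1)} + [N^{-k}]_{(2)}$, which is exactly the assertion that $x_{-k} = M(-k)\, u_0$ for every $k\in{\mathbb Z}$.

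Granted this compatibility, the fundamental-matrix property falls out at once: the first block row of the doubled recurrence is precisely~\eqref{hlsystem}, and substituting $x_k = M(k)\, u_0$ (now valid for every $k\in{\mathbb Z}$) gives $F M(k+1) + G M(-k-1) + A M(k) + B M(-k) = 0$, so $M$ is a fundamental matrix. Uniqueness under $x_0 = u_0$ follows because any solution of~\eqref{hlsystem} produces a sequence $z_k := (x_k, x_{-k})^\top$ satisfying $z_{k+1} = N z_k$ and $z_0 = (u_0, u_0)^\top$, a recursion determined uniquely by its initial value. The main obstacle I anticipate is precisely the symmetry step $JNJ = N^{-1}$ that underpins the compatibility between the forward and backward descriptions of $x_{-k}$; once that conjugation identity is in place, the rest reduces to direct block algebra.
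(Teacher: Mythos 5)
Your proof is correct, and its core is the same reduction the paper uses: double the state with the reflected sequence (the paper writes $v=\varphi^*u$ and composes with $\varphi^*$ and $D$, which is exactly your evaluation of~\eqref{hlsystem} at indices $k$ and $-k-1$), invert the block matrix $\left(\begin{smallmatrix} F & G\\ B & A\end{smallmatrix}\right)$ to get $z_{k+1}=Nz_k$, and read off the top block of $N^k z_0$ with $z_0=(u_0,u_0)^\top$. Where you go beyond the paper is the consistency step: the paper's argument really only establishes that \emph{if} $u$ solves~\eqref{hlsystem} then $u_k=M(k)u_0$ (the uniqueness direction), and leaves implicit why the sequence defined by $u_k:=M(k)u_0$ actually solves~\eqref{hlsystem} --- for that one must know that the bottom block of $N^kz_0$ coincides with the top block of $N^{-k}z_0$, i.e.\ that the second component of the doubled state really is the reflection of the first. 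Your conjugation identity $JNJ=N^{-1}$ (from $JPJ=Q$), combined with $Jz_0=z_0$, is precisely what closes this gap, and it is a clean way to do it. So: same decomposition, but your write-up supplies the existence half of the statement explicitly, which the paper's proof glosses over with ``Hence, we have the result.''
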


\begin{proof}
If we define $v=\varphi^*u$, then we have that problem~\eqref{hlsystem} can be expressed as
\[ FDu+GDv+Au+Bv=0.\] 
Composing with $\varphi^*$, we get
\[ FD^{-1}\varphi^*u+GD^{-1}\varphi^*v+A\varphi^*u+B\varphi^*v=FD^{-1}v+GD^{-1}u+Av+Bu=0.\] 
Now, composing with $D$,
\[ Fv+Gu+ADv+BDu=0.\] 
Hence, we have the system
\[ \left( \begin{array}{c|c}
F & G\\ \hline
B & A
\end{array}\right) \left( \begin{array}{c}
Du \\ \hline
Dv 
\end{array}\right) =-\left( \begin{array}{c|c}
A & B\\ \hline
G & F
\end{array}\right) \left( \begin{array}{c}
u \\ \hline
v 
\end{array}\right) .\] 
The hypotheses of the theorem regarding the invertibility of the matrices imply that this is a regular system,
so we can solve for $Du$ and $Dv$ in the following way:
\begin{equation*}\left( \begin{array}{c}
Du \\ \hline
Dv 
\end{array}\right) =-\left( \begin{array}{c|c}
F & G\\ \hline
B & A
\end{array}\right) ^{-1}\left( \begin{array}{c|c}
A & B\\ \hline
G & F
\end{array}\right) \left( \begin{array}{c}
u \\ \hline
v 
\end{array}\right) .\end{equation*}
In particular, iterating,
\[ \left( \begin{array}{c}
u_k \\ \hline
v_k 
\end{array}\right) =\left[-\left( \begin{array}{c|c}
F & G\\ \hline
B & A
\end{array}\right) ^{-1}\left( \begin{array}{c|c}
A & B\\ \hline
G & F
\end{array}\right) \right]^k\left( \begin{array}{c}
u_0 \\ \hline
v _0
\end{array}\right) =\left[-\left( \begin{array}{c|c}
F & G\\ \hline
B & A
\end{array}\right) ^{-1}\left( \begin{array}{c|c}
A & B\\ \hline
G & F
\end{array}\right) \right]^k\left( \begin{array}{c}
u_0 \\ \hline
u _0
\end{array}\right) ,\] 
for $k\geqslant1$. Therefore,
\[ u_k=\left( \left[-\left( \begin{array}{c|c}
F & G\\ \hline
B & A
\end{array}\right) ^{-1}\left( \begin{array}{c|c}
A & B\\ \hline
G & F
\end{array}\right) \right]^k_{(1)}+\left[-\left( \begin{array}{c|c}
F & G\\ \hline
B & A
\end{array}\right) ^{-1}\left( \begin{array}{c|c}
A & B\\ \hline
G & F
\end{array}\right) \right]^k_{(2)}\right) u_0,\] 
for $k\geqslant1$. We can proceed analogously for $k\leqslant-1$, since
\[ \left( \begin{array}{c}
D^{-1}u \\ \hline
D^{-1}v 
\end{array}\right) =-\left( \begin{array}{c|c}
A & B\\ \hline
G & F
\end{array}\right) ^{-1}\left( \begin{array}{c|c}
F & G\\ \hline
B & A
\end{array}\right) \left( \begin{array}{c}
u \\ \hline
v 
\end{array}\right) .\] 
Hence, we have the result.
\end{proof}

The next theorem serves to construct the Green's function of a system of recurrence relations on ${\mathbb Z}$. The reader may consult \cite{Mill} for more information on the subject in the context of systems of recurrence relations on ${\mathbb N}$ with nonconstant coefficients.
\begin{thm} Consider a system of recurrence relations of the form
 \begin{equation}\label{ns}x_{k+1}=Kx_k,\ k\in{\mathbb Z},\end{equation}
 where $x_k\in{\mathbb F}^n$ and $K\in{\mathcal M}_n({\mathbb F})$ is invertible. Define
\[ \overline H_{k,j}:=\begin{dcases} K^{k-1-j}, & -1\leqslant j\leqslant k-1,\\
- K^{k-1-j}, & k\leqslant j\leqslant -1,\\ 0, & \text{otherwise.}\end{dcases}
\] 
Then $\overline H:=(\overline H_{k,j})_{k,j\in{\mathbb Z}}$ is a Green's function of problem~\eqref{ns}, that is, a solution of
\begin{equation*}x_{k+1}=Kx_k+c_k,\ k\in{\mathbb Z}.
\end{equation*}
where $c=(c_k)_{k\in{\mathbb Z}}\in{\mathcal F}({\mathbb Z},{\mathbb F}^n)$ is given by $u=\overline Hc$.
\end{thm}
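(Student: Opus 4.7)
My plan is to verify the claim directly, by checking that the candidate $u := \overline H c$, interpreted pointwise as $u_k = \sum_{j\in{\mathbb Z}} \overline H_{k,j}\, c_j$, satisfies the non-homogeneous recurrence. The cleanest route is to establish the pointwise matrix identity
\[ \overline H_{k+1,j} - K\, \overline H_{k,j} \;=\; \delta_{k,j}\, I_n, \qquad k,j \in {\mathbb Z}, \]
after which summing against $c_j$ yields $u_{k+1} - K u_k = \sum_{j} \delta_{k,j} c_j = c_k$, which is the desired recurrence. Note that for each fixed $k$ the support of $\overline H_{k,\cdot}$ is a finite interval in ${\mathbb Z}$ (either $\{0,\dots,k-1\}$ or $\{k,\dots,-1\}$), so all sums involved are finite and no convergence issue arises.

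To prove the pointwise identity I would split according to the sign of $j$. For $j \geq 0$, one has $\overline H_{k,j} = K^{k-1-j}$ when $k \geq j+1$ and zero otherwise. The shift $k \mapsto k+1$ enlarges the nontrivial region by one index on the left, so three subcases appear: for $k > j$ both terms are nonzero and $K^{k-j} - K\cdot K^{k-1-j} = 0$; at $k = j$ the first term is $I_n$ and the second vanishes, matching $\delta_{k,j} I_n$; and for $k < j$ both terms are zero. For $j \leq -1$ the symmetric computation applies to $-K^{k-1-j}$ on the region $k \leq j$, producing cancellation for $k < j$, the value $I_n$ at $k = j$, and zero for $k > j$. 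In each subcase one uses only the invertibility of $K$, so negative powers are legitimate.

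The delicate point, and the main thing I would be careful about, is the interface where the piecewise definition switches sign, namely the cluster of indices with $k \in \{-1,0\}$ and $j = -1$. One has to confirm that the ``forward'' and ``backward'' particular solutions glue consistently across the origin; concretely, that $u_0 - K u_{-1} = c_{-1}$, which by direct substitution reduces to checking the values $\overline H_{0,-1}, \overline H_{-1,-1}$ against the identity above. Once this boundary matching is verified, the pointwise identity holds on all of ${\mathbb Z}^2$, and the proof concludes with the single line
\[ u_{k+1} - K u_k \;=\; \sum_{j\in{\mathbb Z}} \bigl(\overline H_{k+1,j} - K\,\overline H_{k,j}\bigr)\, c_j \;=\; \sum_{j\in{\mathbb Z}} \delta_{k,j}\, c_j \;=\; c_k. \]
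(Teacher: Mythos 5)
Your strategy --- establishing the pointwise identity $\overline H_{k+1,j}-K\,\overline H_{k,j}=\delta_k^j\operatorname{Id}$ and summing against $c$ --- is the right one, and it is essentially a reorganization of the paper's own argument (which telescopes the explicit sums $\sum_{j=-1}^{k-1}K^{k-1-j}c_j$ for $k\geqslant 0$ and $-\sum_{j=k}^{-1}K^{k-1-j}c_j$ for $k\leqslant -1$). The problem is that the one step you defer, the ``boundary matching'' at $(k,j)=(-1,-1)$, is exactly the step that does not go through, so the proof cannot be closed as written. From the definition, $\overline H_{0,-1}=K^{0}=\operatorname{Id}$ (the first branch applies, since $-1\leqslant -1\leqslant k-1$ holds for $k=0$) while $\overline H_{-1,-1}=-K^{-1}$, hence
\[ \overline H_{0,-1}-K\,\overline H_{-1,-1}=\operatorname{Id}+\operatorname{Id}=2\operatorname{Id}\neq \delta_{-1}^{-1}\operatorname{Id}. \]
Equivalently, $u=\overline Hc$ satisfies $u_{k+1}-Ku_k=c_k+\delta_k^{-1}c_{-1}$, so the recurrence fails at $k=-1$; in the scalar case $K=1$ one gets $u_0=c_{-1}$, $u_{-1}=-c_{-1}$ and $u_0-u_{-1}=2c_{-1}$. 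Relatedly, your description of the columns $j\leqslant -1$ as supported on $k\leqslant j$ is only correct for $j\leqslant -2$: the column $j=-1$ is reached by \emph{both} branches of the definition, and that overlap is the source of the double count.

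So you should not assert that the interface check succeeds; carried out, it shows the statement is false as printed. The repair is to read the first branch as $0\leqslant j\leqslant k-1$ (or the second as $k\leqslant j\leqslant -2$), after which your case analysis closes with no further changes and also gives $(\overline Hc)_0=0$, as one expects of a Green's function normalized at $k=0$. For what it is worth, the paper's proof shares the blind spot: at $k=-1$ it evaluates $(Du)_{-1}=u_0$ by extending the $k\leqslant -1$ formula to $k+1=0$ (an empty sum), which contradicts the stated value $\overline H_{0,-1}=\operatorname{Id}$. Your write-up at least isolates the correct danger point; it just stops short of performing the computation that decides the matter.
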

\begin{proof} Let $u:=\overline Hc$. Then, for $k\geqslant0$,
\begin{align*} (Du)_k-(Ku)_k= &(D\overline Hc)_k-K(\overline Hc)_k=D\left( \sum_{j=-1}^{k-1}K^{k-1-j}c_j\right) _k-K\sum_{j=-1}^{k-1}K^{k-1-j}c_j\\ =& \sum_{j=-1}^{k}K^{k-j}c_j-\sum_{j=-1}^{k-1}K^{k-j}c_j=c_k.
\end{align*}
Analogously, for $k\leqslant-1$,
\begin{align*} (Du)_k-(Ku)_k= &(D\overline Hc)_k-K(\overline Hc)_k=-D\left( \sum_{j=k}^{-1}K^{k-1-j}c_j\right) _k+K\sum_{j=k}^{-1}K^{k-1-j}c_j\\ =& -\sum_{j=k+1}^{-1}K^{k-j}c_j+\sum_{j=k}^{-1}K^{k-j}c_j=c_k.
\end{align*}

\end{proof}
\begin{thm}\label{thmfin}Consider $J$ as defined in~\eqref{hlsystem} and 
assume that
\[ \left( \begin{array}{c|c}
F & G\\ \hline
B & A
\end{array}\right) \text{ and } \left( \begin{array}{c|c}
A & B\\ \hline
G & F
\end{array}\right) \] 
are invertible. Consider the problem
\begin{equation}\label{procfd5}Jx=c,\quad Wx=h.
\end{equation}
Then the sequence given by
\[ u=\pi_1\left( XZ^{-1}\left[\left( \begin{array}{c}
h \\ \hline
h 
\end{array}\right) -\left( \begin{array}{c}
W \\ \hline
W\varphi^* 
\end{array}\right) Y\right]+Y\right) ,\] 
where 
\[ X:=\left( \left[-\left( \begin{array}{c|c}
F & G\\ \hline
B & A
\end{array}\right) ^{-1}\left( \begin{array}{c|c}
A & B\\ \hline
G & F
\end{array}\right) \right]^k\right) _{k\in{\mathbb Z}},\ Y:=\overline H\left( \begin{array}{c|c}
F & G\\ \hline
B & A
\end{array}\right) ^{-1}\left( \begin{array}{c}
c \\ \hline
\varphi^*c 
\end{array}\right) ,\ Z:=\left( \begin{array}{c}
W \\ \hline
W\varphi^* 
\end{array}\right) X,\] 
 $\overline H$ is the Green's function of problem~\eqref{syst2} and $\pi_1:{\mathbb F}^n\times{\mathbb F}^n\to{\mathbb F}^n$ is such that $\pi_1(x,y)=x$, is the unique solution of problem~\eqref{procfd5}, provided all of the terms involved are well defined and $Z$ is invertible.
\end{thm}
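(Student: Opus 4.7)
The strategy is to reduce~\eqref{procfd5} to a standard first-order inhomogeneous recurrence of doubled dimension, exactly along the lines of the proof of Theorem~\ref{thmexpfm}. Setting $v:=\varphi^*u$ and using the commutation rule $D\varphi^*=\varphi^*D^{-1}$ to compose the defining equation with $\varphi^*$ and then with $D$, I obtain the $2n$-dimensional lifted recurrence
\[
\left(\begin{array}{c|c} F & G \\ \hline B & A \end{array}\right)\left(\begin{array}{c} u \\ \hline v \end{array}\right)_{k+1}+\left(\begin{array}{c|c} A & B \\ \hline G & F \end{array}\right)\left(\begin{array}{c} u \\ \hline v \end{array}\right)_k=\left(\begin{array}{c} c \\ \hline \varphi^*c \end{array}\right)_k,
\]
whose homogeneous part is precisely the system solved in Theorem~\ref{thmexpfm}. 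The invertibility assumptions allow me to rewrite this as $w_{k+1}=Kw_k+g_k$, where $w$ denotes the block column $(u,v)$, $K$ is the matrix from Theorem~\ref{thmexpfm}, and $X(k)=K^k$ is therefore a fundamental matrix of the lifted homogeneous system.

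Once the problem is in first-order form, the Green's function $\overline H$ furnished by the preceding theorem gives, via convolution with $g$, the particular solution $Y$ appearing in the statement. The general solution of the lifted inhomogeneous system then reads $w=X\kappa+Y$ for some $\kappa\in{\mathbb F}^{2n}$, and to descend back to~\eqref{procfd5} I would impose the doubled boundary condition obtained by stacking $Wu=h$ on top of $W\varphi^*v=h$; note that the second block is equivalent to the first as soon as $v=\varphi^*u$ is known. Substituting $w=X\kappa+Y$ turns the doubled boundary condition into the $2n\times 2n$ linear system
\[
Z\kappa=\left(\begin{array}{c} h \\ \hline h\end{array}\right)-\left(\begin{array}{c} W \\ \hline W\varphi^*\end{array}\right)Y,
\]
whose unique solution, under the assumed invertibility of $Z$, yields the announced formula after projecting onto the first $n$ coordinates through $\pi_1$.

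The step I expect to be the main obstacle is verifying that the $u$-component produced this way genuinely satisfies $Ju=c$, and not merely the first block-row of the lifted system; this reduces to checking that the lifted solution automatically satisfies $v=\varphi^*u$. I would handle this through an involution argument. The map $\Phi:(u,v)\mapsto(\varphi^*v,\varphi^*u)$ exchanges the two block-rows of the lifted recurrence and swaps the two components of the doubled forcing $(c,\varphi^*c)$, hence preserves the lifted system; it also preserves the doubled boundary condition, because $\Phi$ interchanges $Wu$ and $W\varphi^*v$, both of which have been set equal to $h$. The uniqueness of $\kappa$ --- ensured by the invertibility of $Z$ --- then forces $(u,v)=\Phi(u,v)$, i.e.\ $v=\varphi^*u$, at which point the first block-row of the lifted recurrence reads exactly $Ju=c$ and the first block of the doubled boundary condition reads exactly $Wu=h$. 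Uniqueness of the solution of~\eqref{procfd5} follows from the same uniqueness of $\kappa$ in the lifted problem.
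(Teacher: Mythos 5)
Your plan follows the same route as the paper's own proof: reduce $Jx=c$ to the doubled first-order system~\eqref{syst2} exactly as in Theorem~\ref{thmexpfm}, write the general solution of that system as $X\kappa+Y$ with the particular part supplied by the Green's function $\overline H$, impose the stacked conditions $Wu=h$, $W\varphi^*v=h$, solve for $\kappa$ using the invertibility of $Z$, and project with $\pi_1$. Where you go beyond the paper is the final involution argument: the published proof never verifies that the unique solution of the lifted boundary value problem satisfies $v=\varphi^*u$, and without that the first block row of the lifted system is not literally the equation $Ju=c$, so the paper in effect only shows that a solution of~\eqref{procfd5}, if one exists, must have the stated form. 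Your check is correct and closes this gap: since $D\varphi^*=\varphi^*D^{-1}$, the map $(u,v)\mapsto(\varphi^*v,\varphi^*u)$ swaps the two block rows of the lifted recurrence and the two components of the forcing $(c,\varphi^*c)$ while fixing the stacked boundary data, so it sends solutions to solutions, and the uniqueness guaranteed by the invertibility of $Z$ forces $v=\varphi^*u$, whence $Ju=c$ and $Wu=h$. The only detail to watch when writing this up is the index bookkeeping in the second component of the forcing term (the paper itself wavers between $\varphi^*c$ and $D\varphi^*c$ there); it does not affect the structure of your argument.
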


\begin{proof}
Proceeding as in the proof of Theorem~\ref{thmexpfm}, we can reduce the equation $Jx=c$ to 
\begin{equation}\label{syst2}\left( \begin{array}{c}
Du \\ \hline
Dv 
\end{array}\right) =-\left( \begin{array}{c|c}
F & G\\ \hline
B & A
\end{array}\right) ^{-1}\left( \begin{array}{c|c}
A & B\\ \hline
G & F
\end{array}\right) \left( \begin{array}{c}
u \\ \hline
v 
\end{array}\right) +\left( \begin{array}{c|c}
F & G\\ \hline
B & A
\end{array}\right) ^{-1}\left( \begin{array}{c}
c \\ \hline
D\varphi^*c 
\end{array}\right) .\end{equation}
A particular solution of~\eqref{syst2} can be expressed as
\[ \left( \begin{array}{c}
u \\ \hline
v 
\end{array}\right) =\overline H\left( \begin{array}{c|c}
F & G\\ \hline
B & A
\end{array}\right) ^{-1}\left( \begin{array}{c}
c \\ \hline
\varphi^*c 
\end{array}\right) 
\] 
so the general solution of~\eqref{syst2} is of the form
\[ \left( \begin{array}{c}
u \\ \hline
v 
\end{array}\right) =\left( \left[-\left( \begin{array}{c|c}
F & G\\ \hline
B & A
\end{array}\right) ^{-1}\left( \begin{array}{c|c}
A & B\\ \hline
G & F
\end{array}\right) \right]^k\right) _{k\in{\mathbb Z}}r+\overline H\left( \begin{array}{c|c}
F & G\\ \hline
B & A
\end{array}\right) ^{-1}\left( \begin{array}{c}
c \\ \hline
D\varphi^*c 
\end{array}\right) 
\] 
with $r\in{\mathbb F}^{2n}$. Then, imposing $Wu=h$, and thus $W\varphi^*v=h$,
\begin{align*}\left( \begin{array}{c}
h \\ \hline
h 
\end{array}\right) = & \left( \begin{array}{c}
W u \\ \hline
W\varphi^*v 
\end{array}\right) =\left( \begin{array}{c}
W \\ \hline
W\varphi^* 
\end{array}\right) Xr +\left( \begin{array}{c}
W \\ \hline
W\varphi^* 
\end{array}\right) Y
\end{align*}
Hence, this system can only be solved uniquely if $Z$ is a regular matrix. Therefore,
\[ r=Z^{-1}\left[\left( \begin{array}{c}
h \\ \hline
h 
\end{array}\right) -\left( \begin{array}{c}
W \\ \hline
W\varphi^* 
\end{array}\right) Y\right].\] 
That is,

\[ \left( \begin{array}{c}
u \\ \hline
v 
\end{array}\right) =XZ^{-1}\left[\left( \begin{array}{c}
h \\ \hline
h 
\end{array}\right) -\left( \begin{array}{c}
W \\ \hline
W\varphi^* 
\end{array}\right) Y\right]+Y.
\] 
Thus,
\[ u=\pi_1\left( XZ^{-1}\left[\left( \begin{array}{c}
h \\ \hline
h 
\end{array}\right) -\left( \begin{array}{c}
W \\ \hline
W\varphi^* 
\end{array}\right) Y\right]+Y\right) .\] 

\end{proof}

\begin{cor}\label{corfin} Assume $a_na_{-n}-b_nb_{-n}\ne 0$. If the problem
\begin{equation}\label{rbvp3}\sum_{j=-n}^n\left( a_jx_{k+j}+b_jx_{-k-j}\right) =c_k,\ k\in{\mathbb Z};\quad x_k=\xi_k,\ k=1,\dots,n,
\end{equation}
 has a solution, it is unique.
\end{cor}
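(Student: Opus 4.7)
My plan is to reduce problem~\eqref{rbvp3} to the first-order vector form~\eqref{hlsystem} analysed in Theorems~\ref{thmexpfm} and~\ref{thmfin}, and then invoke the uniqueness part of the latter. First I would introduce the block unknown $Y_k:=(x_{k-n+1},x_{k-n+2},\ldots,x_{k+n})^T\in\mathbb{F}^{2n}$; the $2n-1$ shift identities $Y_{k+1}^{(i)}=Y_k^{(i+1)}$ ($i=0,\ldots,2n-2$) together with a main row extracted from the scalar equation at an appropriately shifted index would furnish matrices $F,G,A,B\in\mathcal{M}_{2n}(\mathbb{F})$ and a source $C_k\in\mathbb{F}^{2n}$ such that~\eqref{rbvp3} is equivalent to
\[
F\,Y_{k+1}+G\,Y_{-k-1}+A\,Y_k+B\,Y_{-k}=C_k,\qquad k\in\mathbb{Z}.
\]

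Next I would verify the two invertibility hypotheses of Theorem~\ref{thmexpfm}, showing that the block matrices
\[
\begin{pmatrix}F&G\\ B&A\end{pmatrix}\quad\text{and}\quad\begin{pmatrix}A&B\\ G&F\end{pmatrix}
\]
are both regular, with determinants that reduce (up to nonzero factors coming from the shift-identity rows) to powers of $a_na_{-n}-b_nb_{-n}$; the stated hypothesis then makes them invertible. The $n$ initial conditions $x_k=\xi_k$ for $k=1,\ldots,n$ would be encoded as a boundary operator $W$ on the first-order system, and the uniqueness part of Theorem~\ref{thmfin} would force the vector solution, and hence $x$ itself, to be identically zero whenever $c\equiv 0$ and $\xi\equiv 0$.

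The principal obstacle is the invertibility step. A naive companion-matrix reduction that places the coefficients $a_j$ in the last row of $F$ and the coefficients $b_{-j}$ in the last row of $G$ leaves the last row of $A$ identically zero and $B=0$, making $\begin{pmatrix}F&G\\ B&A\end{pmatrix}$ block-triangular with a singular diagonal block, hence not invertible. I would overcome this by distributing the main equations symmetrically between the $F,G$-rows and the $A,B$-rows: combining the scalar equation at $k+1$ with its reflected counterpart at $-(k+1)$ produces two main rows whose joint contribution to the block determinants reduces to a $2\times 2$ minor of the form $\begin{pmatrix}a_n&b_{-n}\\ b_n&a_{-n}\end{pmatrix}$, whose determinant is precisely $a_na_{-n}-b_nb_{-n}$.
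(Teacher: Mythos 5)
Your proposal follows essentially the same route as the paper's proof: there, problem~\eqref{rbvp3} is reduced to the first-order system~\eqref{hlsystem} via the block unknown $y_k=(x_{k-n},\dots,x_{k+n-1})$, with only the leading coefficients $a_n$, $b_n$ placed in $F$, $G$ and the remaining coefficients in the last rows of the companion-type matrices $A$, $B$ (precisely the distribution you arrive at after discarding the naive reduction), after which the block determinants collapse to $|FA-GB|=a_na_{-n}-b_nb_{-n}$ by \cite[Lemma 3.8]{CaTo} and Theorem~\ref{thmfin} is applied with $W$ the evaluation at $0$, so that $Z=\operatorname{Id}$. Your argument is correct and matches the paper's approach.
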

\begin{proof}
Define $y_k=(x_{k-n},\dots,x_{k+n-1})$. Denote by $y_{k,j}$ the $j$-th component of $y_k$ (starting at $j=-n$) and by $y_{\cdot,j}$ the sequence $(y_{k,j})_{k\in{\mathbb Z}}$. Then, we have that $Dy_{\cdot,j}=y_{\cdot,j+1}$ for $j=-n,\dots,n-1$ and
\[ c_k=\sum_{j=-n}^n\left( a_jy_{k,j}+b_j\varphi^*y_{k,-j}\right) ,\ k\in{\mathbb Z}.\] 
Now, define $c=(c_k)_{k\in{\mathbb Z}}$ and $A,B,F,G\in{\mathcal M}_{2n}({\mathbb R})$ such that
\[ F=\left( \begin{array}{c|c}
\operatorname{Id} &\bm 0 \\ \hline
\bm 0 & a_{n}
\end{array}\right) ,\ G=\left( \begin{array}{c|c}
\bm 0 & \bm 0 \\ \hline
\bm 0 & b_{n}
\end{array}\right) 
 ,\] \[ A=\begin{pmatrix}
0 & -1 & 0 & \cdots & 0 & 0 \\ 
0 & 0 & -1 & \cdots & 0 & 0 \\ 
\vdots & \vdots & \vdots & \ddots & \vdots & \vdots \\ 
0 & 0 & 0 & \cdots & -1 & 0 \\ 
0 & 0 & 0 & \cdots & 0 & -1 \\ 
a_{-n} & a_{-n+1} & a_{-n+2} & \cdots & a_{n-2} & a_{n-1}
\end{pmatrix},\ B=\begin{pmatrix}
0 & \cdots & 0 \\ 
\vdots & \ddots & \vdots \\ 
0 & \cdots & 0 \\ 
b_{-n} & \cdots & b_{n-1}
\end{pmatrix}, 
\] 
where $\bm 0$ denotes a zero matrix. We have that problem~\eqref{rbvp3} can be expressed in the form of system~\eqref{hlsystem}, that is, \begin{equation}\label{hlsystem2}Fy_{k+1}+Gy_{-k-1}+A y_k+By_{-k}=c_k,\ k\in{\mathbb Z},\quad y_0=\xi,
\end{equation}
where $\xi=\left( \xi_1,\dots,\xi_n\right) $. Now, by \cite[Lemma 3.8]{CaTo}, we have that
\begin{align*}& \left|\begin{array}{c|c}
F & G\\ \hline
B & A
\end{array}\right|= \left|\begin{array}{c|c}
A & B\\ \hline
G & F
\end{array}\right|=|FA-BG|\\= &\begin{vmatrix}
0 & -1 & 0 & \cdots & 0 & 0 \\ 
0 & 0 & -1 & \cdots & 0 & 0 \\ 
\vdots & \vdots & \vdots & \ddots & \vdots & \vdots \\ 
0 & 0 & 0 & \cdots & -1 & 0 \\ 
0 & 0 & 0 & \cdots & 0 & -1 \\ 
a_na_{-n}-b_nb_{-n} & a_na_{-n+1}-b_nb_{-n+1} & a_na_{-n+2}-b_nb_{-n+2} & \cdots & a_na_{n-2}-b_nb_{n-2} & a_na_{n-1}-b_nb_{n-2}
\end{vmatrix}\\ = & a_na_{-n}-b_nb_{-n}\ne 0.
\end{align*}

On the other hand, $W$ acts on $y$ as evaluating $y$ on $0$, so
\[ Z:=\left( \begin{array}{c}
W \\ \hline
W\varphi^* 
\end{array}\right) X=\operatorname{Id},\] 
is invertible. Hence, applying Theorem~\ref{thmfin}, we conclude that the system~\eqref{hlsystem2} has a unique solution and thus so does problem~\eqref{rbvp3}.
\end{proof}
\begin{rem} Observe that the problem in Example~\ref{exac} fails to meet the hypotheses of Corollary~\ref{corfin}.
\end{rem}

\section{Conclusions and open problems}

Throughout this work we have developed a theory of linear recurrence equations and systems with reflection and constant coefficients. Most of the theory is valid for fields of arbitrary characteristic. We would have to avoid division dividing by $0$, for instance when defining the operators $\widetilde E$, $\widetilde O$, $E$ and $O$. For more information on recurrence relations on fields in arbitrary characteristic the reader may consult \cite{Ivanov,Conrad}.

There are some clear ways in which the theory could be extended. We point out here some of them.
\begin{itemize}
\item \emph{Non-constant coefficients:} The theory of linear differential equations with constant coefficients (and its difference counterpart) is basically the same than in the constant coefficient case. The main difference in the case of systems is whether a fundamental matrix can be obtained explicitly by taking the exponential of the matrix $A(t)$ defining the system, something which is true if $A(t)A(s)=A(s)A(t)$ for every $t$ and $s$ \cite{CabToj,Kotin}. Unfortunately, this happens under very restrictive circumstances \cite{CabToj}, so the explicit computation of the Green's functions will not be possible in general.
\item \emph{General involutions:} In the theory of differential equations with involutive functions\footnote{Here, for $n\geqslant 2$, we consider a function $f$ to be \emph{involutive order $n$} or an \emph{involution of order $n$} if $f^n=\operatorname{Id}$ and $f^k\ne f$ for $k=2,\dots,n-1$ --cf. \cite{Wie2}. Some other authors consider the term involution only for the case $n=2$, which is standard in other fields, using \emph{finite order operators} for the case presented here.} we have to work with differentiable or at least continuous involutive functions \cite{CabToj} (such as is the case of the reflection), but this poses the severe restriction that continuous involutive functions of order $n$ on connected sets of the real line have to be of order two \cite{McS,cabada2015differential}. This restriction disappears in the context of recurrence relations, which gives rise to three questions worth answering. First, Which are the different involutive functions on ${\mathbb Z}$ for each given order? second, How do the operators which are the pullback of those involutive functions interact with the right shift operator? and last, Under which circumstances can we solve recurrence relations with those involutions?

It is unlikely that we will obtain a full answer to the first question, but we can restrict our research to those involutions that behave well with respect to right shifts. We could start by studying, for instance, involutions that are just transpositions of elements of the sequence since the interaction of the involution with the right shift operator is easily manageable in this case.

More general involutions (that is, involutive operators that are not the pullback by an involutive function) such as $\Lambda$ occurring in Remark~\ref{remlam} are worth studying since they satisfy very attractive properties (for instance, in the case of $\Lambda$, it anticommutes with the right shift operator).

\item \emph{Partial difference equations:} There is also the possibility to move from recurrence in one independent variable to recurrence in several independent variables. Some analogous work has been done previously in the case of partial differential equations with reflection \cite{ToTo}. Again, there is the possibility to study involutions of order greater than two.
 \end{itemize}

\section*{Acknowledgements}
The author would like to acknowledge his gratitude towards Professor Lorena Saavedra for her help regarding the Green's function of linear recurrence relations and the anonymous referees for their helpful comments.


\begin{thebibliography}{10}
\providecommand{\url}[1]{{#1}}
\providecommand{\urlprefix}{URL }
\expandafter\ifx\csname urlstyle\endcsname\relax
  \providecommand{\doi}[1]{DOI~\discretionary{}{}{}#1}\else
  \providecommand{\doi}{DOI~\discretionary{}{}{}\begingroup
  \urlstyle{rm}\Url}\fi

\bibitem{Aft}
Aftabizadeh, A.R., Huang, Y.K., Wiener, J.: \emph{Bounded solutions for
  differential equations with reflection of the argument}.
\newblock J. Math. Anal. Appl. \textbf{135}(1), 31--37 (1988)

\bibitem{Ag}
Agarwal, R.: Difference Equations and Inequalities.
\newblock Dkker, New York (1992)

\bibitem{Cab5}
Cabada, A., Infante, G., Tojo, F.A.F.: \emph{Nontrivial solutions of
  Hammerstein integral equations with reflections}.
\newblock Bound. Value Probl. \textbf{2013}(1), 1--22 (2013)

\bibitem{Cab4}
Cabada, A., Tojo, F.A.F.: \emph{Comparison results for first order linear
  operators with reflection and periodic boundary value conditions}.
\newblock Nonlinear Anal. \textbf{78}, 32--46 (2013)

\bibitem{CabToj2}
Cabada, A., Tojo, F.A.F.: \emph{Solutions of the first order linear equation
  with reflection and general linear conditions}.
\newblock Global Journal of Mathematical Sciences \textbf{2}(1), 1--8 (2013)

\bibitem{CabToj}
Cabada, A., Tojo, F.A.F.: \emph{Existence results for a linear equation with
  reflection, non-constant coefficient and periodic boundary conditions}.
\newblock J. Math. Anal. Appl. \textbf{412}(1), 529--546 (2014)

\bibitem{Toj3}
Cabada, A., Tojo, F.A.F.: \emph{Solutions and {Green's} function of the first
  order linear equation with reflection and initial conditions}.
\newblock Bound. Value Probl. \textbf{2014}(1), 99 (2014)

\bibitem{cabada2015differential}
Cabada, A., Tojo, F.A.F.: Differential equations with involutions.
\newblock Atlantis Press (2015)

\bibitem{CTMal}
Cabada, A., Tojo, F.A.F.: \emph{{Green's} functions for reducible functional
  differential equations}.
\newblock Bull. Malays. Math. Sci. Soc. pp. 1--22 (2016)

\bibitem{Cabada2014b}
Cabada, A., Tojo, F.A.F.: \emph{Periodic solutions for some phi-Laplacian and
  reflection equations}.
\newblock Boundary Value Problems \textbf{2016}(1), 1--16 (2016)

\bibitem{CaTo}
Cabada, A., Tojo, F.A.F.: \emph{On linear differential equations and systems
  with reflection}.
\newblock Appl. Math. Comput. \textbf{305}, 84--102 (2017)

\bibitem{Da1}
Dassios, I.: \emph{On non-homogeneous generalized linear discrete time
  systems}.
\newblock Circuits, Systems, and Signal Processing \textbf{31}(5), 1699--1712
  (2012)

\bibitem{Da2}
Dassios, I.: \emph{A practical formula of solutions for a family of linear
  non-autonomous fractional nabla difference equations}.
\newblock Journal of Computational and Applied Mathematics \textbf{339},
  317--328 (2018)

\bibitem{graf}
Graf, U.: Introduction to hyperfunctions and their integral transforms: an
  applied and computational approach.
\newblock Springer Science \& Business Media (2010)

\bibitem{Gup}
Gupta, C.P.: \emph{Existence and uniqueness theorems for boundary value
  problems involving reflection of the argument}.
\newblock Nonlinear Anal. \textbf{11}(9), 1075--1083 (1987)

\bibitem{Ivanov}
Ivanov, N.V.: \emph{Algebra of linear recurrence relations in arbitrary
  characteristic}.
\newblock arXiv preprint arXiv:1604.04947  (2016)

\bibitem{Jordan}
Jordan, C.: Calculus of finite differences.
\newblock Chelsea Publishing Company, New York (1950)

\bibitem{Conrad}
Keith, C.: \emph{Solving linear recursions over all fields}.
\newblock (preprint)  (2016).
\newblock
  \urlprefix\url{http://www.math.uconn.edu/~kconrad/blurbs/linmultialg/linearrecursion.pdf}

\bibitem{Kell}
Kelley, W.G., Peterson, A.C.: Difference equations: an introduction with
  applications.
\newblock Academic press (2001)

\bibitem{Sarsenbi2}
Kopzhassarova, A., Sarsenbi, A.: \emph{Basis properties of eigenfunctions of
  second-order differential operators with involution}.
\newblock Abstract and Applied Analysis \textbf{2012} (2012)

\bibitem{Kotin}
Kotin, L., Epstein, I.J.: \emph{On matrices which commute with their
  derivatives}.
\newblock Linear and Multilinear Algebra \textbf{12}(1), 57--72 (1982)

\bibitem{McS}
McShane, N.: \emph{On the periodicity of homeomorphisms of the real line}.
\newblock Am. Math. Mon. pp. 562--563 (1961)

\bibitem{Mickens}
Mickens, R.E.: Difference equations: theory, applications and advanced topics.
\newblock CRC Press (2015)

\bibitem{Mill2}
Miller, K.S.: An introduction to the calculus of finite differences and
  difference equations.
\newblock Holt New York (1960)

\bibitem{Mill}
Miller, K.S.: Linear Difference Equations.
\newblock W. A. Benjamin, New York (1968)

\bibitem{Ore}
O'Regan, D.: \emph{Existence results for differential equations with reflection
  of the argument}.
\newblock J. Aust. Math. Soc. \textbf{57}(02), 237--260 (1994)

\bibitem{Sarsenbi1}
Sadybekov, M., Sarsenbi, A.: \emph{Criterion for the basis property of the
  eigenfunction system of a multiple differentiation operator with an
  involution}.
\newblock Differential equations \textbf{48}(8), 1112--1118 (2012)

\bibitem{ToTo}
Tojo, F.A.F., Torres, P.J.: \emph{Green’s Functions of Partial Differential
  Equations with Involutions}.
\newblock Journal of Applied Analysis and Computation \textbf{7}(3), 1127--1138
  (2017)

\bibitem{Wie2}
Wiener, J.: Generalized solutions of functional differential equations.
\newblock World Scientific (1993)

\end{thebibliography}
\end{document}